\pgfplotsset{compat=1.13}
\newif\ifshowfigure
\begin{document}

% \showfiguretrue

% \thispagestyle{empty}
% \tableofcontents

%%%%%%%%%%%%%%%%%%%%%%%%%%%%%%%%%%%%%%%%%%%%%%%%%%%%%%%%%%%%%%%%%%%%%%%%%%%%%%%%%%%%%%%%%%%%%%%%
\title{
The first passage problem for stable linear delay equations perturbed by power law L\'evy noise 
}

\date{\null}

\author{Michael A.\ H\"ogele\footnote{Departamento de Matem\'aticas, Universidad de los Andes,
Bogot\'a, Colombia; ma.hoegele@uniandes.edu.co}  \ 
~and
~Ilya Pavlyukevich\footnote{Institut f\"ur Stochastik, Friedrich--Schiller--Universit\"at Jena, Ernst--Abbe--Platz 2, 
07743 Jena, Germany; ilya.pavlyukevich@uni-jena.de}
}

\maketitle

\begin{abstract}
This article studies a linear scalar delay differential equation subject to small
multiplicative power tail L\'evy noise. 
We solve the first passage (the Kramers) problem with probabilistic methods 
and discover an asymptotic loss of memory in this non-Markovian system.
Furthermore, the mean exit time increases as the power of the small noise amplitude, whereas
the pre-factor accounts for memory effects. 
In particular, we discover a non-linear 
delay-induced exit acceleration due to a non-normal growth phenomenon. 
Our results are illustrated in the well-known linear delay oscillator  
driven by $\alpha$-stable L\'evy flights.
\end{abstract}

\noindent\textbf{Keywords: } 
linear delay differential equation; 
$\alpha$-stable L\'evy process; 
L\'evy flights; 
heavy tails;
first passage times;
first exit;
exit location.

\noindent\textbf{AMS Subject Classification: } 
60H10; 60G51; 37A20; 60J60; 60J75; 60G52

% \begin{footnotesize}
%  \tableofcontents
% \end{footnotesize}

\section{Introduction and main results}

The Kramers problem, that is, 
the escape time and location of a randomly excited deterministic dynamical system 
from the proximity of a stable state at small intensity 
was first stated in the context of physical chemistry in the seminal works of 
\cite{Arrhenius-89}, \cite{Eyring-35} and \cite{Kramers-40}. The solution of this 
classical problem is ubiquitous nowadays 
and has given since crucial insight in many diverse areas such as statistical mechanics, 
insurance mathematics, climatic energy balance models and led to the discovery of more complex dynamical effects 
such as for instance stochastic resonance in complex systems \cite{BenziPSV-81, BenziPSV-82, BenziPSV-83}. 

In the mathematics literature, 
for Markovian systems such as of ordinary and partial differential equations with small Gaussian noise 
this problem was studied extensively with the help of large deviations theory 
which goes back to the seminal work by \cite{Cramer-38} and later by \cite{FW70, Wentzell-76,FreidlinW-12}, and in the 
recent years 
by 
\cite{DeuStr89,DemboZ-98,BarBovMel10,BovEckGayKle02,BovierEGK-04,BerglundG-04,BerGen-10,BerGen-12,CerRoeck-04,Freidlin-00}. %\marginpar{more refs: Berlund, Cerrai, Genz}. %\cite{GalvesOV-87} 
It is well-known 
that for $\e$-small Brownian diffusion in the potential well, 
the expected exit (excitation) time grows \textit{exponentially} as
\begin{equation}\label{e:GaussExit}
\E \tau^\e \sim \ex^\frac{\bar V}{\e^2}, 
\end{equation}
where $\bar V$ is proportional to the height of the potential barrier  
which has to be overcome by the particle. The exit location is determined by 
the deterministic energy minimizing path. 

\cite{PenlandE-08}, however, discusses ``physical origins of stochastic forcing'' and the 
trade off between Gaussian and non-Gaussian white and colored noises. 
Following their lines of reasoning, modelling with \emph{L\'evy noises} 
is the second best choice in complex systems, since they allow for richer effects
such as 
local asymmetry and the presence of large bursts and jumps which are difficult to realize only with Gaussian influences. 
Furthermore \cite{bodai2017predictability} give evidence for physicality 
of L\'evy noises in particular due to the predictability of fat-tail extremes 
while in the limit of small noise intensity, 
the (re-normalized) exit times are exponentially distributed and hence memoryless or unpredictable. 
Heavy-tailed noise has also been found present in many physical systems, for instance in works by 
\cite{Ditlevsen-99a, Ditlevsen-99b}, \cite{ChenWuDuanKurts-19} and \cite{GaiHoeKosMon17}.

Due to a large variety of L\'evy processes, i.e.\ stochastically continuous processes with independent 
stationary increments, there is no general Kramers' theory for L\'evy driven systems. 
Besides the well studied Gaussian case, 
a large deviations result for heavy tail Markovian processes was obtained by \cite{Godovanchuk-82}.  
For exponentially light jumps this question has been solved in 
one dimension by \cite{ImkPavlWetz-09, ImkPavWet10}. 
In addition, there is a large deviations theory for a special class of 
parameter dependent \textit{accelerated} noise with exponentially light tails 
by \cite{BudhirajaDupuisMaroulas-11} based on a variational representation. 
All these approaches yield exponential exit rates on the precise noise dependence. 
Further recent results on the first exit and metastability of L\'evy driven systems in finite and infinite dimensions 
were obtained by
\cite{ImkellerP-06,Pavlyukevich11,DHI13,HoePav-13}. 
It is worth mentioning that in the case of an overdamped particle subject to $\e$-small 
$\alpha$-stable noise, $\alpha \in (0,2)$, the expected exit time behaves \textit{polynomially} 
 \begin{equation}\label{e:StableExit}
\E \tau^\e \sim \frac{\bar V}{\e^\alpha}, 
\end{equation}
whereas the constant $\bar V$ has a very different interpretation. 
It is not the lowest \textit{height} of any mountain pass the continuous Brownian diffusion 
path has to \textit{climb} between different potential wells. 
Instead, it quantifies the \textit{tunelling} effect
of the ``large'' jumps that instantaneously overcome the 
(horizontal) \textit{distance} 
between the deterministic stable state 
where the process lingers most of the time between such ``large'' jumps  
and the exterior of its domain of attraction. 

In this article, we study such small heavy tailed perturbation 
of a beforehand non-Markovian dynamical system given as a linear delay differential equation. 
The simplest deterministic qualitative model of this kind is given by a linear retarded equation  
for the El Ni\~no-Southern Oscillation phenomenon (ENSO) in \cite{BatHir89}:
\ba\label{e:retarded}
\frac{\di}{\di t} X(t)&=AX(t)+ B X(t-r), 
\ea
where $A$ is the sum of all processes that induce local changes in the SST, 
that is, the horizontal advection, thermal damping, mean and anomalous upwellings on the 
vertical temperature gradients. The coefficient $B$ subsumes the effects 
of the equatorial Kelvin waves. 
Positive $A$ means that the sum of effects of upwelling and thermal advection 
dominate the thermal damping, so that the temperature grows. 
However, negative values of $B$ can induce stable or periodic solutions. 
For instance, the parameter choice $A=2.2$, $B=3.8$, and a delay time of $r=0.5$ as in \cite{BatHir89} 
leads to unstable oscillations with a period of approximately $3.0$ years and
a growth rate of $1.1$ while \cite{Burgers99} argued that changing this to $A=2.4$,
$B=-2.8$, and a delay time of $r=0.3$ leads to a period of approximately $4$ years and
a decay rate $1.5$.

A more complex non-linear double-well model with an additional cubic term $-X^3$ was considered by \cite{SuaSch1988}, 
followed by a number of papers
by e.g.\ \cite{MueCanZeb91,TziStoCanJar94,GhiZalTho08,ZalGhi10}.  

\cite{zabczyk1987exit} studied the following delay equation perturbed by a small Brownian motion $\e W$
\ba
\di X^\e(t)&= \mathcal{A}(X_t^\e)\,\di t+ \e\, \di W(t)\\
\ea
with a nonlinear Lipschitz continuous vector field 
$\mathcal{A}$. 
Applying a control theoretic approach to this equation 
he established a large deviations principle and 
showed that in analogy to the non-delay case discussed above 
the asymptotocs \eqref{e:GaussExit} holds true, however, with $\bar V$ being an abstract solution 
of a difficult delay control problem. 
Recently \cite{Lipshutz18} extended these results to 
the small noise SDDEs with multiplicative noise  
in the spirit of \cite{FreidlinW-12} and established the asymptotics of the 
first exit time of the type \eqref{e:GaussExit}.
\cite{azencott2018large} considered the retarded Gaussian delay equation 
as a Gaussian process and established the respective large deviations principle 
and the optimal exit paths with the help of the very elaborate Gaussian process theory.
In \cite{bao2016asymptotic} the authors study delay systems perturbed by small accelerated 
L\'evy noise with light tails in the spirit of \cite{BudhirajaDupuisMaroulas-11}. 
More on stochastic double-well systems can be found in \cite{Masoller-02,Masoller-03} and 
for L\'evy noise also in \cite{HuangTaoXu-11}, where the authors study 
the asymptotics in the limit of small delay. 
 
In this paper we study the first exit problem from an interval 
of the delay differential equation 
\ba
\di X^\e(t)&= \mathcal{A} X^\e_t\,\di t+ \e F(X^\e_t)\, \di Z(t),\\
\ea
with a general \textit{linear} stable finite delay $\mathcal{A}$ 
perturbed by a small multiplicative heavy-tailed L\'evy noise $\e Z$, 
including $\alpha$-stable but also more general weakly tempered perturbations.  
The phenomenological reason for our setting is that on the one hand 
we recover the rate \eqref{e:StableExit}, however, 
we detect a new \textit{non-normal growth effect} in the factor $\bar V$, 
which we can calculate explicitly in the case of a retarded system \eqref{e:retarded}. 
This effect accounts for the non-zero probability of small jump increments, 
which leads to an exit due to deterministic motion well after the occurrence of this jump 
and can be seen in the asymptotic distribution of the exit location, 
which in contrast to the non-delay case exhibits a point measure precisely 
on the boundary of the exit interval. 
It is easily seen that this effect vanishes if we send the memory depth to $0$. 
In other words, at first sight these results for non-Markovian systems appear surprising, 
however, since the delay time $r$ is negligible w.r.t.\ the exit time scale 
$\e^{-\alpha}$, the system behaviour is ``almost'' Markovian.
Nevertheless the memory affects the prefactor in the asymptotics 
of the first exit time and the limiting 
distribution of the exit location.

The methodological reason for considering this equation is 
the adaption of the proof strategy by \cite{Godovanchuk-82} and 
\cite{ImkPavlWetz-09} which is an elementary but very helpful
application the Markov property which
seems to be suitable for adaptation in different contexts of the physics literature. 
In addition, our setup covers generic non-degenerate potential gradient systems the linearized around their stable state. 

A technical reason to study this particular problem is that 
in order to trace this effect we need a precise 
understanding of the deterministic dynamics.  
In particular, several important properties of this equation 
are readily given in the literature, 
such as existence, uniqueness of solutions and the invariant measure in \cite{GK-00} and  
the \textit{segment Markov} property in \cite{RRvG06}.
% and the important \'Em\'ery inequality for small perturbations in \cite{RRvG07}.

The article is organized as follows. After the general setup we present our main result 
in Theorem 2.2 followed by the discussion and examples, 
where we compare our results to the linear setting of \cite{zabczyk1987exit} 
and explicitly calculate the nonlinear growth factor. 
The rigorous proofs are postponed to the Mathematical Appendix and consist of two parts. In Section \ref{s:general} we give general estimates on the deterministic 
relaxation dynamics and then on the stochastic perturbation. 
In Section \ref{s:markov} in  we show a generic upper and a lower bound 
on the segment Markov process reducing the dynamics 
to four scenarios on a finite interval. 
This finite interval dynamics 
is treated in Section \ref{s:Main estimates} for each of 
the cases what leads to the proof of the main result.

\section{Object of Study and Results}

\subsection{General setup}

The model under consideration is the following linear delay equation 
with finite memory $r>0$ perturbed by small L\'evy noise defined below.
For $r>0$ fixed and each $T>0$, we denote by $D[-r,T]$ 
the space of real valued right-continuous functions $\phi\colon [-r, T] \to \bR$ with left limits (the so-called c\`adl\`ag 
functions). Analogously we define the space $D[-r,\infty)$. For a function $\phi\in D[-r,\infty)$ we 
we introduce the segment
of $\phi$ at time $t \geq 0$ as the function $\phi_t(\cdot)\in D[-r,0]$ defined by
$\phi_t(s)=\phi(t+s)$ for $s\in[-r,0]$.
For a function $\phi \in D[-r,0]$, we denote its uniform norm by $\|\phi\|_r:=\sup_{t\in[-r,0]}|\phi(t)|$.

Let $\mu$ be a finite, signed measure on the interval $[-r, 0]$, so-called the \textit{memory measure}. 
Consider the following underlying deterministic linear delay equation
\ba
\label{e:de}
x(t; \phi) &= \phi(0) + \int_0^t \int_{[-r,0]} x(s+u; \phi)\, \mu(\di u)\, \di s,\quad t\geq  0,\\ 
x(t; \phi) &= \phi(t),\quad  t\in [-r,0),
\ea
where $\phi\in D[-r, 0]$. It is well known that this equation has a unique solution 
which e.g.\ can be obtained by the method of steps as in \cite{Hale-Lunel-93}.
Similarly to the case of linear ODEs or PDEs, 
the solution $x(\cdot; \phi)$ can be written down explicitly 
as a convolution integral, namely,
\ba
\label{e:conv}
x(t;\phi) &= \phi(0) x^*(t) + \int_{[-r, 0]} \int_u^0  x^*(t-s+u)\phi(s)\, \di s \, \mu(\di u) ,\quad t\geq 0 \\ 
x(t;\phi) &= \phi(t) ,\quad  t\in [-r, 0),  
\ea
where the fundamental solution $x^*(\cdot)$ is the unique solition of 
\eqref{e:de} with the initial segment $\phi^*(t)=0$, $t\in[-r,0)$, and $\phi^*(0)=1$.
% 
% \ba
% x^*(t) &= 1 + \int_0^t \int_{[-r,0]} x^*(s+u)\, \mu(\di u)\, \di s,\quad t\geq  0,\\ 
% x^*(t) &= 0,\quad  t\in [-r,0).
% \ea

Let $(\Omega, \rF, (\rF_t)_{t\geq 0}, \P)$ be a stochastic basis satisfying the usual 
conditions in the sense of \cite{Protter-04} and let $Z=(Z(t))_{t\geq 0}$ be an adapted real-valued 
L\'evy process
with the characteristic triplet $(\sigma^2, d, \nu)$. The marginal laws of $Z$ are described by the L\'evy--Khintchine formula
\ba
\label{e:LKZ}
\ln \E \ex^{\i u Z(t)}=-\frac{\sigma^2}{2}u^2t +\i\, d u t+t\int\Big(\ex^{\i uz}-1-\i u z\bI_{[-1,1]}(z)\Big)\,\nu(\di z),
\ea
where the Gaussian variance $\sigma^2\geq 0$, the drift $d\in\bR$, and the L\'evy jump 
measure $\nu$ satisfies $\nu(\{0\})=0$ and $\int (z^2\wedge 1)\,\nu(\di z)<\infty$. 
For the general theory on L\'evy processes see \cite{Sato-99,Applebaum-09}.
To introduce multiplicative noise into the equation \eqref{e:de}, 
we define the ``diffusion'' coefficient $F\colon D[-r,0]\to D[-r,0],$ which we 
assume to be functional Lipschitz, i.e.\
there is a constant $L>0$ such that for all $\phi$, $\psi \in D[-r,0]$ we have
\begin{equation}
\label{eq:functionalLipschitz}
\|F(\phi)  -  F(\psi) \|_r \leq L \|\phi-\psi\|_r.
\end{equation}
The functional $F$ can be for example of the form $F(\phi)(t)=f(\phi(t-r_1),\dots,\phi(t-r_n)))$
for point delays $r_i\in [0,r]$, and $f$ being Lipschitz in all its arguments, such that
\ba
F(X^\e_t)(0-)=f\Big(X^\e(t-r_1-),\dots,X^\e(t-r_n-)\Big).
\ea
Further examples of $F$ can be found in \cite[Example 2.1]{RRvG06}.

Under the assumptions formulated above we consider the stochastic 
delay differential equation with an initial condition $\phi \in D[-r,0]$ and $\e>0$
\begin{align}
\label{e:SDDE}
X^\e(t) &= 
\phi(0) + \int_0^t \Big[\int_{-u}^0 X^\e(s+u) \mu(\di u)\Big]\, \di s 
+ \e \int_0^t F(X^\e_s)(0-) \, \di Z(s), \quad  t\geq 0,  \\
X^\e(t) &= \phi(t), \quad t\in [-r, 0).                
 \end{align}
and denote $X^\e(\cdot;\phi)=X^\e(\cdot)$ its solution. 

\noindent The multiplicative noise term is understood as the It\^o stochastic integral which requires 
the predictability of the integrand $F(X^\e_s)(0-)=\lim_{h \uparrow 0} F(X^\e_s)(h)$.
Note that in the pure Gaussian continuous setting $F(X^\e_s)(0-)=F(X^\e_s)(0)$.

\begin{thm}[\cite{RRvG06}] 
Fix $\e\in (0,1]$ and let $F$ be functional Lipschitz. %and assume $x^*$ to be absolutely continuous. 
Then for any $\phi \in D[-r,0]$ there exists a unique solution $X^\e$ of equation \eqref{e:SDDE} which satisfies the convolution formula 
\begin{equation}\label{eq: convolution formula}
X^\e(t; \phi) =  \phi(0)x^*(t)+\int_{[-r,0]}\int_s^0 x^*(t+s-u)\phi(u)\,\di s\, \mu(\di u) + \e\int_0^t x^*(t-s) F(X^\e(\phi))(s-)\, \di Z(s), t\geq 0. 	 
\end{equation}
The solution $X^\e$ is Markovian in the segment space $D[-r,0]$, i.e.\ for each $0\leq s\leq t$ and each measurable set 
$B\subseteq D[-r,0]$
\ba
\label{e:MP}
\P(X^\e_t\in B|\rF_s)=\P(X^\e_t\in B|X_s^\e)\text{ a.s.}
\ea
\end{thm}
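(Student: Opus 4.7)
The statement packages three assertions -- existence and uniqueness of the strong solution, the explicit convolution representation \eqref{eq: convolution formula}, and the segment Markov property \eqref{e:MP} -- which I would address in that order.

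For existence and uniqueness the natural approach is a Picard iteration for the map
\begin{equation*}
\Phi(Y)(t) \;=\; \phi(0) + \int_0^t \int_{[-r,0]} Y(s+u)\, \mu(\di u)\, \di s + \e \int_0^t F(Y_s)(0-)\, \di Z(s),
\end{equation*}
viewed as a self-map on the space of $(\rF_t)$-predictable c\`adl\`ag processes with prescribed initial segment $\phi$. Since $Z$ need not admit any moments, I would first localise by stopping when $|Y|$ exits large balls and by truncating jumps of $Z$ larger than some fixed threshold, so that the truncated driving process has all moments and the It\^o isometry / BDG inequality becomes available. On a short interval $[0,\delta]$ the Lipschitz bound on $F$, the finiteness of the total variation $|\mu|([-r,0])$, and BDG combine to make $\Phi$ a strict contraction in the supremum $L^2$-norm over $D[-r,\delta]$; a Gronwall estimate, concatenation on consecutive intervals of length $\delta$, and removal of the truncation then yield the global unique strong solution.

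For the convolution representation I would substitute the right-hand side of \eqref{eq: convolution formula} back into \eqref{e:SDDE} and verify equality. The deterministic part is a standard variation-of-constants computation that uses the defining property of $x^*$ as the fundamental solution of \eqref{e:de} with unit initial segment $\phi^*$. For the noise contribution, a stochastic Fubini applied to the double integral, together with the identity $x^*(0)=1$ for matching the boundary term at $t=0$, reduces the candidate to the stochastic integral of $F(X^\e_s)(0-)$ against $\di Z(s)$ appearing in \eqref{e:SDDE}. For the segment Markov property \eqref{e:MP} I would invoke uniqueness: the process $(X^\e(s+h))_{h\ge 0}$ is the unique solution of the same SDDE driven by the shifted noise $Z(s+\cdot)-Z(s)$ and starting from the $\rF_s$-measurable segment $X^\e_s$; since Lévy increments after time $s$ are stationary and independent of $\rF_s$, the conditional law given $\rF_s$ depends on $\rF_s$ only through $X^\e_s$.

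The main technical obstacle is the lack of moments for $Z$: a direct $L^p$ fixed-point argument is unavailable, so the truncation/localisation scheme above must be pushed through, and one must then check that the explicit convolution formula passes to the limit when the jump truncation is removed. A subsidiary annoyance is justifying the stochastic Fubini exchange in the verification of \eqref{eq: convolution formula}, where the kernel $x^*(t-s+u)$ couples both integration variables; boundedness of $x^*$ on compact time intervals (a consequence of the method of steps applied to \eqref{e:de}) is what ultimately makes this step routine.
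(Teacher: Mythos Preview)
The paper does not prove this theorem at all: it is stated with the attribution \cite{RRvG06} and no proof is given, the result being imported wholesale from Rei{\ss}, Riedle, and van Gaans (2006). There is therefore no ``paper's own proof'' to compare your proposal against.

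That said, your sketch is a sensible outline of how such a result is established, and it is in the spirit of what one finds in the cited reference: Picard iteration with localisation to handle the lack of moments of $Z$, variation-of-constants plus stochastic Fubini for the convolution formula, and the flow/uniqueness argument combined with the independent-increment property of $Z$ for the segment Markov property. If you were asked to supply a proof here, the appropriate response would be to point to the original source rather than to reproduce the argument.
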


\subsection{Main results}

Our main result characterizes the interplay between the deterministic stability and the power laws 
of the noise. We will need the following Hypotheses.

\smallskip

\noindent
$\textbf{H}_\mu:$
We assume that the delay equation \eqref{e:de} is stable, i.e.\
the memory measure $\mu$ satisfies 
\begin{equation}
\label{e:neg-char-exp}
-\Lambda:= \sup_{\la_\iota} \operatorname{Re} \la_\iota < 0, 
\end{equation}
for $\la_\iota$ being a solution of the characteristic equation
\ba
\la - \int_{-r}^0 \ex^{ u \la}\, \mu(\di u) = 0.
\ea
Condition \eqref{e:neg-char-exp} implies that 
for each $\lambda<\Lambda$ there is a constant $K=K(\lambda)>0$ such that
\ba
\label{e:K}
|x^*(t)|\leq K\ex^{-\lambda t},\quad t\geq 0.
\ea
Zero is a stable state. For any function $\phi\in D[-r,0]$ such that $\phi(t)=0$, $t\in[-r,0)$,
we assume that $F(\phi)(0-)=F_0\neq 0$.

\smallskip

\noindent
$\textbf{H}_\nu:$ The goal of this paper is to treat the heavy tail phenomena. 
A convenient analytic tool for this is the theory  
of regularly varying functions, i.e.\ functions which behave asymptotically like power
functions.
Let $\lambda_\e$ denote the tail of the L\'evy measure $\nu$,
\ba
\lambda_\e=\int_{|z|>\frac{1}{\e}}\nu(\di z),\quad \e>0.
\ea
We assume that there exist $\alpha>0$ and a non-trivial self-similar Radon measure $\bar\nu$ on $\bar \bR\backslash\{0\}$
such that
for any $u>0$ and any Borel set $A$ bounded away from the origin, $0\notin \bar{B}$, 
the following limit holds true:
\begin{equation}
\label{e:rv}
\bar\nu(u B)=\lim_{\e\to 0} \frac{\nu(uB/\e)}{\lambda_\e} = 
\frac{1}{u^\alpha} \lim_{\e\to 0} \frac{\nu(B/\e )}{\lambda_\e}= \frac{1}{u^\alpha} \bar\nu(B).
\end{equation}
In particular, there exists a non-negative function $l$ slowly varying at zero 
such that 
\ba
\lambda_\e = \e^{\alpha} l(\e)\quad\mbox{ for all} \quad \e>0.
\ea
The self-similarity property of the limiting measure $\bar \nu$ 
implies that is has no atoms, $\bar\nu(\{z\})=0$, $z\neq 0$, and hence, in the one-dimensional case, $\bar\nu$ always has 
the power density
\ba
\label{e:barnu}
&\bar\nu(\di z)=\bar c_-\frac{\bI(z<0)}{|z|^{1+\alpha}}\,\di z+\bar c_+\frac{\bI(z>0)}{z^{1+\alpha}}\,\di z,\quad
\alpha>0,\quad \bar c_\pm\geq 0, \ \bar c_-+\bar c_+>0.
\ea

For the interval $[a,b]$, $a<0<b$, 
we define the first exit time
\ba
\tau^\e=\tau^\e(\phi)&=\inf\{t\geq 0\colon X^\e(t;\phi)\notin [a,b]\}.
\ea

\noindent Due to the continuity of the fundamental solution we obtain that the set of jump sizes
\ba
{}[e_-,e_+]=\{z\in\bR    \text{ such that } z\cdot F_0\cdot x^*(t)\in [a,b] \text{ for all }t\geq 0  \}
\ea
is a closed interval with $e_-<0<e_+$.
Denote by 
\ba
\label{e:E}
E&=E(a,b)=  E(a,b;A,B,r):=\{z\in\bR\colon \exists \, t\geq 0\text{ such that } z\cdot F_0\cdot x^*(t)\notin [a,b]  \}\\
&=[e_-,e_+]^c=(-\infty,e_-)\cup (e_+,+\infty)\\
\ea
the set of jump sizes which cause the exit from the interval $[a,b]$. Furthermore consider the sets
\ba
&E_b=\{z\in\bR\colon  z\cdot F_0\cdot x^*(t)\text{ exits from $[a,b]$ into $(b,\infty)$} \},\\
&E_a=\{z\in\bR\colon  z\cdot F_0\cdot x^*(t)\text{ exits from $[a,b]$ into $(-\infty,a)$} \},\\
&E=E_a\bigsqcup E_b,\\
&E_b^j(v)=\{z\in\bR\colon  z\cdot F_0\in (v,\infty) \},\quad v> b,\\
&E_a^j(v)=\{z\in\bR\colon  z\cdot F_0\in (-\infty,v) \},\quad v< a.
\ea
Recall that the homogeneity of the measure $\bar\nu$ guarantees that $\bar\nu(\{e_\pm\})=0$. 

\begin{thm}
\label{t:main}
Let Hypotheses \emph{$\textbf{H}_\mu$} and \emph{$\textbf{H}_\nu$} hold true. 
Let $[a,b]$ be an interval, $a<0<b$, and let $\phi\in D[-r,0]$ be an initial segment with no exit, i.e.\
such that 
\ba
\label{e:phinoexit}
a< \inf_{t\in[-r,\infty) }x(t,\phi)\leq \sup_{t\in[-r,\infty)}x(t,\phi)< b.
\ea
For the set $E$ defined in \eqref{e:E},
assume that $\bar\nu(E)>0$. 

\noindent  
1. For each $u>0$ we have 
\ba
\lim_{\e\to 0}\P_\phi(\lambda_\e \tau^\e>u)= \ex^{-u\bar\nu(E)}.  
\ea
2. In addition, we have 
\ba
\lim_{\e\to 0} \lambda_\e \E_\phi \tau^\e=\frac{1}{\bar\nu(E)}. 
\ea
3. In the limit $\e\to 0$, the exit location is given by 
\ba
X^\e(\tau^\e;\phi)\stackrel{\di}{\longrightarrow} 
\Pi_a^j\cdot \frac{\bar c_-\bI_{(-\infty,a)}(z)}{|z|^{1+\alpha}}\,\di z
+
\Pi_a^c \cdot \delta_a(\di z)
+\Pi_b^c \cdot \delta_b(\di z)
+\Pi_b^j\cdot \frac{\bar c_+\bI_{(b,\infty)}(z)}{z^{1+\alpha}}\,\di z,
\ea
where
\ba
\Pi_b^j&=\lim_{v\downarrow b}\frac{\bar\nu(E_b\backslash E_b^j(v))}{\bar\nu(E)},\qquad 
\Pi_a^j=\lim_{v\uparrow a}\frac{\bar\nu(E_a\backslash E_a^j(v))}{\bar\nu(E)},\\
\Pi_b^c&=\Pi_b-\Pi_b^j,\qquad
\Pi_a^c=\Pi_a-\Pi_a^j.
\ea
Note that
\ba
\Pi_a^j+\Pi_a^c+\Pi_b^c+\Pi_b^j=1.
\ea
\end{thm}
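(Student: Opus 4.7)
The plan follows the interlacing strategy of Godovanchuk and Imkeller--Pavlyukevich--Wetzel, adapted to the segment Markov setting supplied by Theorem 2.1. First I would decompose $Z=\xi^\e+\eta^\e$, where $\eta^\e$ collects all jumps of $Z$ of absolute size exceeding $1/\e$ and $\xi^\e$ is the remaining compensated small-jump martingale (absorbing the Gaussian part and the drift). The jumps of $\eta^\e$ arrive at Poisson times $T_1<T_2<\cdots$ with intensity $\lambda_\e\to 0$ and i.i.d.\ sizes $\zeta_k$ of law $\nu|_{\{|z|>1/\e\}}/\lambda_\e$. The key scale separation is that the deterministic relaxation rate $\Lambda$ in $\textbf{H}_\mu$ is $O(1)$ while the inter-arrival scale $\lambda_\e^{-1}$ explodes; thus exit must be triggered at some $T_k$.

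Next I would control the inter-jump dynamics. Using the convolution representation \eqref{eq: convolution formula}, the exponential bound \eqref{e:K} on $x^*$, and a Doob-type inequality applied to the stochastic convolution against $\xi^\e$, one shows that $\sup_{t\in[T_k+r,T_{k+1})}\|X^\e_t\|_r = o(1)$ with probability tending to $1$, uniformly over the random inter-jump window. The crucial point is that the exponential decay of $x^*$ delivers a variance bound of order $\e^\alpha$ for the stochastic convolution uniformly in $t$, independent of the length of the window. Combined with the deterministic relaxation of the post-jump segment, this yields $X^\e_{T_{k+1}-}\approx 0$ in segment norm, so by the functional Lipschitz assumption on $F$ we get $F(X^\e_{T_{k+1}-})(0-)\to F_0$, and the next big jump produces an increment approximately equal to $\e F_0\zeta_{k+1}$. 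On the interval $[T_{k+1},T_{k+2})$ the trajectory is therefore uniformly close to $\e F_0\zeta_{k+1}\cdot x^*(\cdot-T_{k+1})$. By definition of $E$ and the regular variation in \eqref{e:rv},
\ba
\P\Big(\e F_0\zeta_{k+1}\cdot x^*(\cdot-T_{k+1})\text{ exits }[a,b]\Big)=\frac{\nu(E/\e)}{\lambda_\e}\longrightarrow \bar\nu(E).
\ea
Iterating the segment Markov property \eqref{e:MP} over these restart instants, the index $k^*$ of the first exit-triggering big jump is asymptotically geometric with success probability $\bar\nu(E)$, and multiplication of $T_{k^*}$ by $\lambda_\e$ converts this into an $\mathrm{Exp}(\bar\nu(E))$ limit, proving part~1. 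For part~2 the same iteration yields a stretched-geometric upper bound $\P_\phi(\lambda_\e\tau^\e>m)\leq q^m$ with $q<1$ for all small $\e$, giving uniform integrability and hence convergence of means.

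For part~3 I would classify the exit mechanism according to the jump $\zeta_{k^*}$. If $\e F_0\zeta_{k^*}\notin[a,b]$, the jump itself deposits $X^\e(T_{k^*})$ outside the interval (a direct exit); the scaling \eqref{e:rv} applied to $E_b^j(b)/\e$ and $E_a^j(a)/\e$ converts the conditional law of $\e\zeta_{k^*}$ into the Pareto densities $\bar c_\pm|z|^{-1-\alpha}$ restricted to the corresponding tail, producing the two absolutely continuous components. If $\e F_0\zeta_{k^*}\in[a,b]$ but $\e F_0\zeta_{k^*}\cdot x^*(t)\notin[a,b]$ for some $t>0$ (an indirect, non-normal-growth exit), the exit occurs continuously along the deterministic fundamental solution and therefore hits the boundary exactly at $b$ (resp.\ $a$); the corresponding mass is captured by the limit $v\downarrow b$ in $\bar\nu(E_b\setminus E_b^j(v))/\bar\nu(E)$, giving the atom $\Pi_b^c\delta_b$, and symmetrically for $a$. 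Since $\bar\nu(\{e_\pm\})=0$, the four components are exhaustive and sum to one.

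The main obstacle is the uniform control of the approximation $X^\e_{T_k-}\approx 0$: the segment error must be small enough to pass through the functional Lipschitz constant of $F$ and, more importantly, must not flip the exit/no-exit dichotomy near the boundary $\partial E=\{e_-,e_+\}$ of the set of exit-causing jump sizes. This is precisely where the atomlessness $\bar\nu(\{e_\pm\})=0$ supplied by self-similarity is used, to show that the ambiguous layer of jump sizes is asymptotically $\bar\nu$-negligible. The analogous delicacy appears in part~3 at $\partial E_b^j(v)$, where the double limit $v\downarrow b$ and $\e\to 0$ must be exchanged, and is again saved by the absence of atoms of $\bar\nu$ at $b/F_0$.
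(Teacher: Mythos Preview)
Your overall architecture --- one-big-jump heuristic, segment Markov iteration, classification of the exit into direct-jump versus non-normal-growth --- is the same as the paper's. The difference, and the place where your sketch has a real gap, is in the choice of cut-off and the time grid for the Markov iteration.

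You split $Z=\xi^\e+\eta^\e$ at level $1/\e$ and propose to iterate the Markov property at the random arrival times $T_k$ of $\eta^\e$. The inter-arrival windows then have length of order $\lambda_\e^{-1}$, and on such a window the ``small'' part $\e\xi^\e$ is \emph{not} uniformly small: jumps of $\xi^\e$ with $|z|$ in $(\theta/\e,1/\e]$ for any fixed $\theta<1$ produce $O(1)$ displacements of $X^\e$, and there are $O(1)$ such jumps in each window. Your pointwise variance bound of order $\e^\alpha$ for the stochastic convolution is correct for a single fixed time, but it does not imply $\sup_{t\in[T_k+r,T_{k+1})}\|X^\e_t\|_r=o(1)$; a Doob inequality applied to the raw integral over a window of length $\lambda_\e^{-1}$ gives only an $O(1)$ bound. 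Worse, if $\min(|e_-|,e_+)<1$ these medium jumps lie in $E$ and can themselves trigger exit, so the first exit need not occur at any $T_k$ and the geometric-trial picture collapses.

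The paper sidesteps this by a different bookkeeping: it decomposes $Z=\xi^\rho+\eta^\rho$ at a \emph{fixed} level $\rho>1$ (so that $\e\xi^\rho$ is genuinely $O(\e)$ on any fixed time interval by a standard BDG estimate, Lemma~\ref{l:M}), and it iterates the segment Markov property at \emph{deterministic} times $kT$ for a fixed $T$, via two abstract renewal lemmas (Lemmas~\ref{l:b} and \ref{l:a}) that convert short-time bounds on $\P_\phi(\tau^\e\le T)$ and $\P_\phi(\tau^\e\le T,\ X^\e(\tau^\e)\in B)$ into the exponential law, the mean, and the exit-location limit. All the hard work then reduces to four finite-horizon estimates (Lemmas~\ref{l:i1}--\ref{l:i4}), where the single $\eta^\rho$-jump in $[0,T]$ plays the role of your $\zeta_{k^*}$. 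Your identification of the two exit mechanisms in part~3 and the use of $\bar\nu(\{e_\pm\})=0$ to kill the ambiguous boundary layer are exactly right and match the paper; the issue is only the control between jumps, which the fixed-$\rho$/fixed-$T$ scheme resolves cleanly.
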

We discover the positive weight $\Pi^c = \Pi_a^c + \Pi_b^c$ on the boundary $\{a, b\}$ 
which represents the probability of an asymptotically continuous exit from the interval $[a, b]$ 
and stemps from the non-normal growth effect of the deterministic delay equation.

\subsection{Examples and Discussion}

We start this section with examples of L\'evy processes with regularly varying heavy tails which satisfy Hypothesis $\textbf{H}_\nu$.

\begin{exa}
Any $\alpha$-stable L\'evy process with the stability index $\alpha\in (0,2)$, the skewness parameter $\beta\in[-1,1]$, and the scale
parameter $c>0$ satisfies Hypothesis $\textbf{H}_\nu$.  
Indeed, such a L\'evy process $Z$ has the characteristic function
\ba
\E \ex^{\i u Z(1)}=\begin{cases}                
	  \exp\Big( -c|u|^\alpha (1-\i \beta \tan\frac{\pi\alpha}{2})\sgn u  \Big),\quad \alpha\in (0,1)\cup (1,2),\\
	   \exp\Big( -c|u| (1+\i \beta \frac{2}{\pi}\sign u \ln |u|  \Big),\quad \alpha=1,
                   \end{cases}
\ea
and (see \cite[Chapter 3.5]{UchaikinZ-99}) its jump measure $\nu$ has the form
\ba
&\nu(\di z)=\Big(c_-\frac{\bI(z<0)}{|z|^{1+\alpha}}+c_+\frac{\bI(z>0)}{z^{1+\alpha}}\Big)\,\di z\\
\ea
with
\ba
c_\pm=\begin{cases}
\displaystyle \frac{c\cdot(1\pm\beta)}{2|\Gamma(-\alpha)|\cos(\frac{\alpha\pi}{2})},\quad \alpha\in (0,1)\cup (1,2),\\
\displaystyle \frac{c\cdot(1\pm\beta)}{\pi},\quad \alpha=1.
\end{cases}
\ea
In this case, the limiting measure $\bar \nu$ coincides with $\nu$, so that $\bar c_\pm=c_\pm$ in \eqref{e:barnu} and 
\ba
\label{e:lambdaeps}
\lambda_\e=\frac{c_++c_-}{\alpha}\e^\alpha .
\ea
\end{exa}
\begin{exa}
Weakly tempered stable L\'evy processes form another important class of perturbations with heavy tails. Various ways of tempering 
have been introduced, e.g.\ in \cite{SokolovCK-04,Rosinski-07}. Roughly speaking, small jumps of a weakly tempered 
$\alpha$-stable L\'evy process
look like those of an $\alpha$-stable process, but the large jumps, and hence the tails of the p.f.d.\ are of the order $|x|^{-1-r}$
for some $r>0$. It is easy to construct a weakly tempered stable L\'evy process with the help of its jump measure defined as
\ba
\nu(\di z)
&=\Big(c_-\frac{\bI(z<0)}{|z|^{1+\alpha_1}(1+z^2)^{\alpha_2/2}}+c_+\frac{\bI(z>0)}{z^{1+\alpha_1}(1+z^2)^{\alpha_2/2}}\Big)\,\di z,\\
&\alpha_1\in(0,2),\ \alpha_2>0,\quad c_\pm\geq 0,\quad c_++c_->0.
\ea
In this case the limiting measure is
\ba
&\bar\nu(\di z)= \Big(c_-\frac{\bI(z<0)}{|z|^{1+\alpha}}+c_+\frac{\bI(z>0)}{z^{1+\alpha}}\Big)\,\di z,\quad \alpha=\alpha_1+\alpha_2>0,
\ea
and $\lambda_\e$ is as in \eqref{e:lambdaeps}.
\end{exa}
\begin{exa}
The L\'evy measures from the previous examples can be ``contaminated'' by some slowly varying function $l_\pm=l_\pm(z)$, 
like $l(z)=\ln(1+|z|)$ or
any finite nonnegative function $l$ such that there exists limits $l_\pm =\lim_{z\to\pm\infty} l_\pm(z)\in (0,\infty)$, e.g.\
one can consider jump measures of the form
\ba
&\nu(\di z)=\Big(l_-(z)\frac{\bI(z<0)}{|z|^{1+\alpha}}+l_+(z)\frac{\bI(z>0)}{z^{1+\alpha}}\Big)\,\di z.
\ea
Moreover, the additional influence of any drift $d$ and a Brownian motion $\sigma W$ (see \eqref{e:LKZ}) 
is negligible in comparison to the heavy jumps and 
does not change the asymptotic characteristics of the exit time and location.
\end{exa}

\medskip 
\noindent The sets $E$, $E_a$, $E_b$, $E_b^j(v)$, and $E_b^j(v)$ appearing in Theorem \ref{t:main} 
are determined in terms of the characteristics of the fundamental solution 
$x^*$. Generally, the fundamental solution is not known explicitly, however its maximum and minimum can be obtained numerically.
By the definition of the fundamental solution, its maximum satisfies
\ba
M=\max_{t\geq 0} x^*(t)\geq 1,
\ea
is well defined and is attained somewhere on $[0,\infty)$.
On the other hand the minimum of $x^*(\cdot)$ may not be attained (e.g.\ for $x^*(t)=\ex^{-t}$) and we set
\ba
m=\inf_{t\geq 0}x^*(t)\leq 0.
\ea
Assume for definiteness that $F_0>0$. Then the values $e_\pm$ 
and the set $E=[e_-,e_+]^c$ can be calculated explicitly as  
\ba
\label{e:epem}
&e_+=\sup\{z>0 \text{ such that } z\cdot F_0\cdot x^*(t)\in (a,b) \text{ for all }t\geq 0\}
=\frac{|a|}{F_0|m|} \wedge \frac{b}{F_0M},\\
&e_-=\inf\{z<0 \text{ such that } z\cdot F_0\cdot x^*(t)\in (a,b) \text{ for all }t\geq 0\}
=-\Big(\frac{|a|}{F_0 M}\wedge\frac{b}{F_0|m|}\Big),
\ea
where we set $\frac10=+\infty$. Analogously one can determine the sets $E_a$, $E_b$, $E_a^j(v)$, and $E_b^j(v)$
but the explicit formulae cannot be given here in general since one needs additional information
whether $M$ or $m$ is attained first.

We finish the discussion with the analysis of linear retarded equation. 

\begin{exa}\label{ex:Levy}
Consider the linear retarded equation
\ba
\dot X^\e(t)&=A X^\e(t)+B X^\e(t-r)+ \e \dot Z(t),\quad t\geq 0,\\ 
X^\e(t)&=\phi(t),\ t\in[-r,0], 
\ea
driven by a symmetric additive $\alpha$-stable noise $Z$
with the L\'evy measure $\nu(\di z)=|z|^{-1-\alpha}\,\di z$, $\alpha\in(0,2)$.
The stability region of the deterministic equation $\dot x(t)=A x(t)+B x(t-r)$ obviously 
coincides with the stability region of the rescaled equation $\dot y=\tilde A y(t)+\tilde B y(t-1)$, 
where $\tilde A=Ar$, and $\tilde B=Br$; their fundamental solutions satisfy $x^*(r t)=y^*(t)$, $t\geq -1$.
The stability region of the parameters $(\tilde A,\tilde B)$ 
is depicted on Fig.~\ref{f:stability}. It is bounded by the upper straight
line $\tilde A+\tilde B=0$, $\tilde A<1$, and the lower line
which is given parametrically as
\ba
\tilde A=\zeta\cot\zeta,\quad \tilde B=-\frac{\zeta}{\sin\zeta},\quad \zeta\in(0,\pi),
\ea
see \cite[Section 5.2 and Theorem A.5]{Hale-Lunel-93} for more details.
\begin{figure}
\centerline{\includegraphics{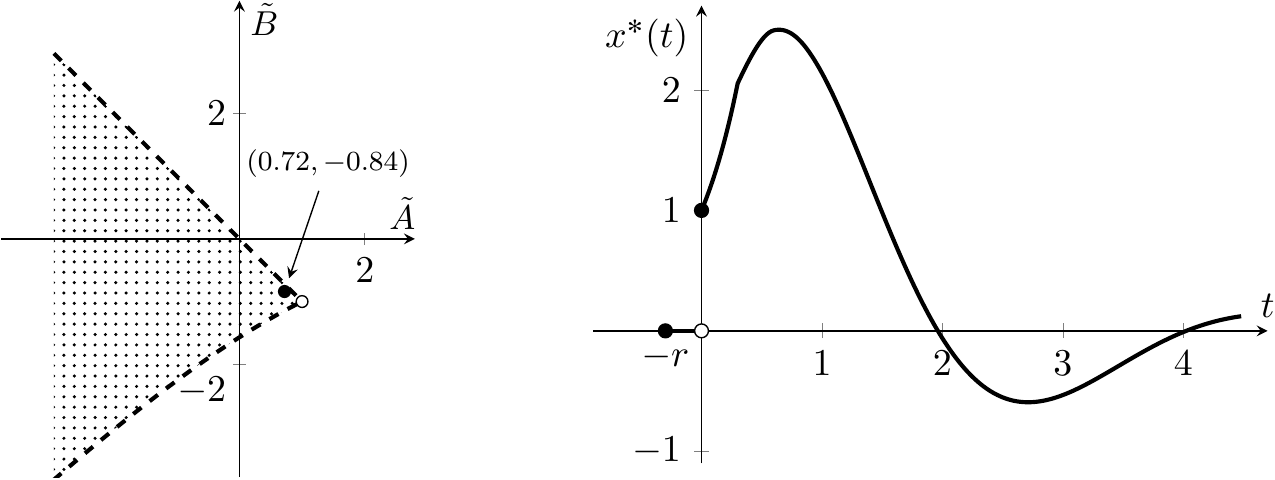}}
\caption{Left: Stability region of the linear retarded equation $\dot y(t)=\tilde A y(t)+\tilde B y(t-1)$. The point 
$(0.72,-0.84)$ corresponds to the parameter values $A=2.4$, $B=-2.8$, $r=0.3$ in the example from \cite{Burgers99}. 
Right: The fundamental solution $t\mapsto x^\ast(t)$ of the equation $\dot x(t)=2.4 x(t)-2.8 x(t-0.3)$
\label{f:stability}}
\end{figure}
\begin{figure}
\centerline{\includegraphics{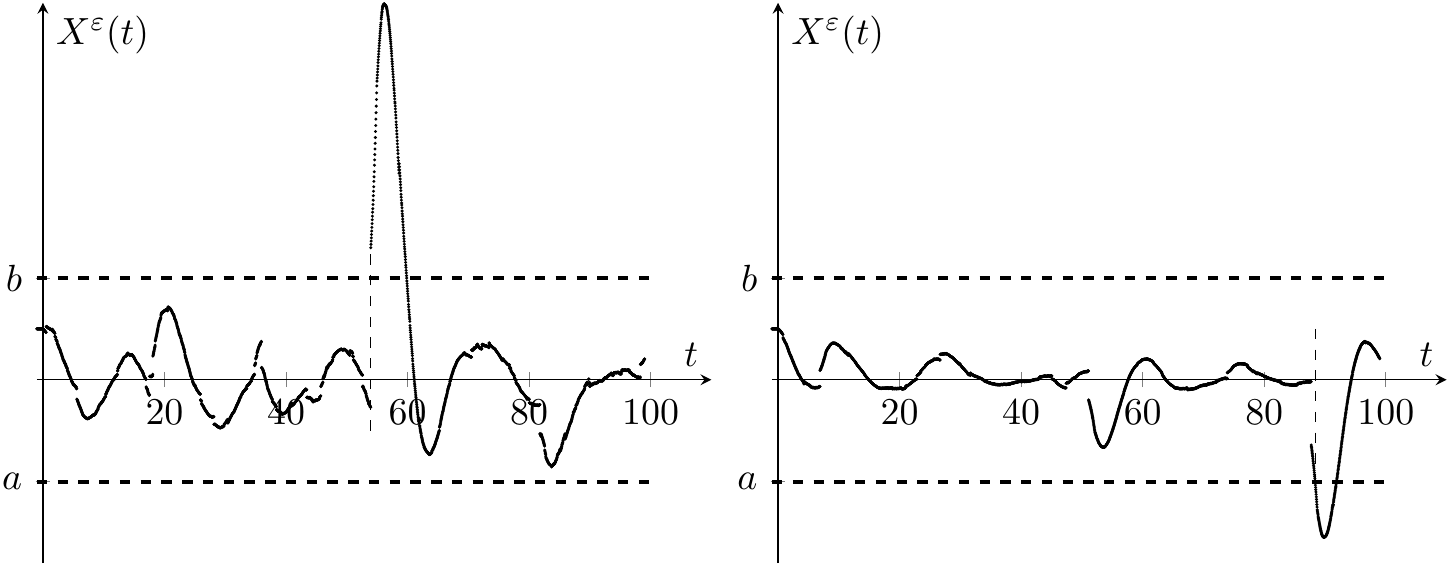}}
 \caption{The most probable exit patterns of the perturbed delay equation $\dot X^\e(t)=2.4 X^\e(t)-2.8 X^\e(t-0.3)+\e \dot Z(t)$ from
\cite{Burgers99}.
Left: The instant exit due to a large jump. Right: The exit due to a large jump and non-normal growth. In this case, the limiting
exit location $X^\e(\tau^\e)$ is supported by the end points $a$ and $b$.  \label{f:paths}}
\end{figure}
\begin{figure}
\centerline{\includegraphics{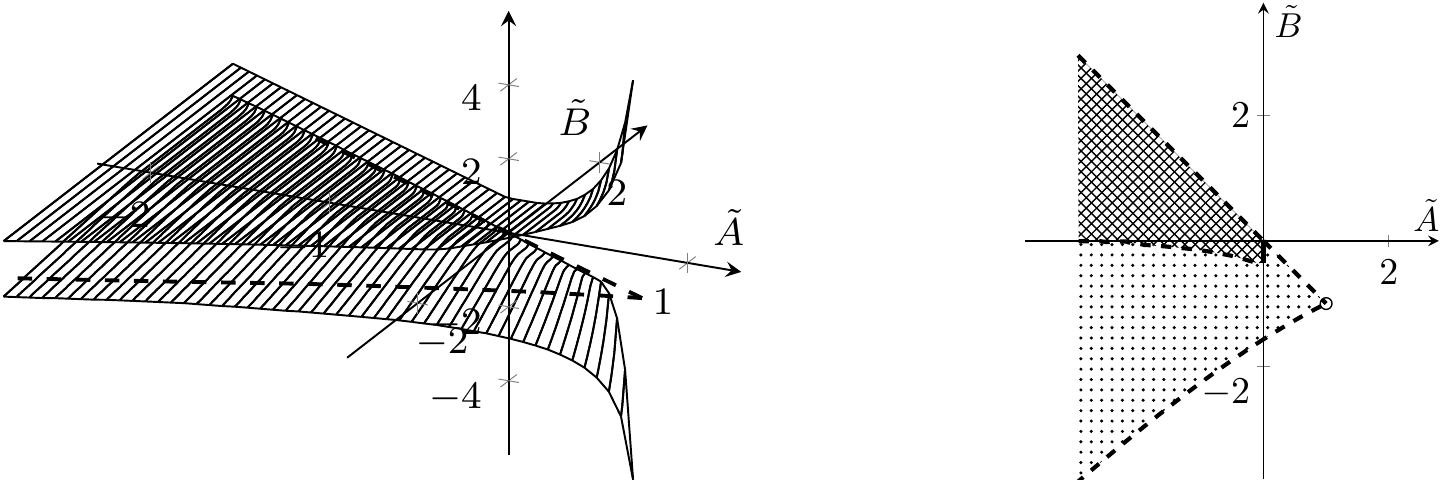}}
\caption{Left: The maximum $M$ and the infimum $m$ of the fundamental solution $y^*$ of the retarded delay equation 
$\dot y(t)=\tilde A y(t)+\tilde B y(t-1)$ for $(\tilde A,\tilde B)$ in the stability region.
Right: The domain of parameters $(\tilde A,\tilde B)$ where $M=1$ and $m=0$. For these parameters, 
the asymptotics of the mean exit times \eqref{e:exitdelay} and \eqref{e:nodelay} of the equation with and without delay coincide. 
\label{f:domain}}
\end{figure}

\noindent The most probable exit trajectories of the perturbed delay equation $\dot X^\e(t)=2.4 X^\e(t)-2.8 X^\e(t-0.3)+\e \dot Z(t)$ from
\cite{Burgers99} are presented on Fig.\ \ref{f:paths}. Taking into account the formulae \eqref{e:lambdaeps} and 
\eqref{e:epem} we find that the mean first exit time from an interval $[a,b]$ around zero satisfies 
\ba
\label{e:exitdelay}
\E_\phi \tau^\e\approx\frac{1}{2\e^\alpha}\Big( 
\frac{M^\alpha}{|a|^\alpha}\vee\frac{|m|^\alpha}{b^\alpha}
+
\frac{|m|^\alpha}{|a|^\alpha} \vee \frac{M^\alpha}{b^\alpha}
\Big)^{-1}
\ea
where the values $M=M(A,B,r)$ and $m=m(A,B,r)$ depend on the values $A$, $B$, and $r$
in a complex nonlinear way, see Fig.\ \ref{f:domain}. 
Denoting by
$\tilde M=\tilde M(\tilde A,\tilde B)=M(Ar,Br,1)$ and $\tilde m=\tilde m(\tilde A,\tilde B)=m(Ar,Br,1)$ 
the extreme values of the fundamental solution $y^*$ we get that
\ba
\label{e:exitdelaytilde}
\E_\phi \tau^\e\approx\frac{1}{2\e^\alpha}\Big( 
\frac{\tilde M^\alpha}{|a|^\alpha}\vee\frac{|\tilde m|^\alpha}{b^\alpha}
+
\frac{|\tilde m|^\alpha}{|a|^\alpha} \vee \frac{\tilde M^\alpha}{b^\alpha}
\Big)^{-1}.
\ea 
The equation without delay, i.e.\ for $B=0$, is stable for $A<0$ with the 
fundamental solution $x^*(t)=\ex^{A t}$, so that $M=1$ and $m=0$, and mean exit time has the asymptotics
\ba
\label{e:nodelay}
\E_\phi \tau^\e\approx\frac{1}{2\e^\alpha}\Big(\frac{1}{|a|^\alpha}+\frac{1}{b^\alpha}\Big)^{-1},
\ea
see \cite{Godovanchuk-82,ImkellerP-06,ImkellerP-06a}. It is interesting to note that $M=1$ and $m=0$ holds for parameters $\tilde A=Ar$
and $\tilde B=Br$ from a larger domain. This domain can be determined numerically and is depicted on  Fig.\ \ref{f:domain}.
For parameters in this domain, the asymptotics of the mean exit times \eqref{e:exitdelay} and 
\eqref{e:nodelay} of the equation with and without delay coincide, and the asymptotic exit location has no atoms in $a$ and $b$.
Hence, the first exit dynamics of the delay equation is effectively the same as for the equation without delay.
\end{exa}

\noindent Eventually it is instructive to compare our results with the asymptotics of the exit time in the Gaussian case which 
has been studied for the first time by \cite{zabczyk1987exit}.

\begin{exa}[Example 1, \cite{zabczyk1987exit}]\label{ex:Zabc}
Consider a linear retarded equation driven by the Brownian additive noise
\ba
\dot X^\e(t)&=A X^\e(t)+B X^\e(t-r)+ \e \dot W(t),\quad t\geq 0,\\ 
X^\e(t)&=\phi(t),\ t\in[-r,0], 
\ea
The stability region of this equation has been described in the previous example. With the help of the 
large deviations theory one obtains that
for any $\phi$ with no exit (see \eqref{e:phinoexit})
\ba
\label{e:exitGauss}
\e^2 \ln \E_\phi \tau^\e\sim \frac{(a\wedge b)^2}{r G(\tilde A,\tilde B)},
\ea
where the value $G$ is obtained in terms of the fundamental solution $y^\ast$ of $\dot y(t)=\tilde Ay(t)+\tilde By(t-1)$:
\ba
\label{e:G}
G(\tilde A,\tilde B)=2\int_0^\infty (y^*(t))^2\,\di t&=\frac{2}{\pi}\int_0^\infty \frac{\di t}{(\tilde A+\tilde B\cos t)^2+(t+\tilde B\sin t)^2}\\
&=\begin{cases}
\displaystyle \frac{\tilde B q^{-1}\sin q-1}{\tilde A+\tilde B\cos q} ,\quad  \tilde B+ |\tilde A|<0,\ q=\sqrt{\tilde B^2-\tilde A^2},\\
\displaystyle  \frac{1+|\tilde A|}{2|\tilde A|}, \quad \tilde B=\tilde A<0,\\
\displaystyle  \frac{\tilde Bq^{-1}\sinh q -1}{\tilde A+\tilde B\cosh q} , \quad  \tilde A+|\tilde B|<0,\ q=\sqrt{\tilde A^2-\tilde B^2},
\end{cases}
\ea
see \cite[Lemma 1.5]{Shaikhet-11}. For the equation without delay, i.e.\ with $B=\tilde B=0$ one gets from 
\eqref{e:exitGauss} and \eqref{e:G} the well known exponentially large Kramers' time
\ba
\E_\phi \tau^\e\sim \ex^{|A|\frac{(a\wedge b)^2}{\e^2}},\quad A<0.
\ea
\end{exa}

\bigskip

\section{Conclusion} 

In this article we solve the first exit time and location 
problem from an interval $[a, b]$, $a < 0 < b$,  
in a general class of stable linear delay differential equations 
for $\e$-small multiplicative, power law noise, such as $\alpha$-stable L\'evy flights. 
In particular, we cover the linearization of gradient systems 
close to a stable state. 

We recover, on the one hand, the asymptotic 
polynomial exit rate of the order $\varepsilon^{-\alpha}$ 
known in different Markovian settings 
which comes with an $\varepsilon$-independent prefactor.  
In the delay case this prefactor depends nonlinearly on the memory depth $r>0$ 
and can significantly reduce the expected exit times 
compared to the non-delay case 
reflecting a non-normal growth phenomena 
of the deterministic delay dynamics 
which changes the stochastic dynamics. 
This mirrors the situation for Brownian perturbations, 
where such an effect has been known since a long time 
as explained in Example \ref{ex:Zabc}.
The case of retarded equations is explained 
in detail in Example \ref{ex:Levy}. 

Secondly, this non-normal growth becomes evident 
in the limiting exit location from the interval $[a, b]$ 
where in contrast to the non-delay case with jump exits 
we detect a point mass on the boundary. 
This mass stems from random trajectories which exit 
essentially due to deterministic motion. 

The method of proof applied in the mathematical 
appendix consists of a series of elementary 
but new estimates valid for general Markov processes 
and well suited to be adapted to other settings.

\bigskip 

\section{Mathematical Appendix: Proof of the main Theorem \ref{t:main}\label{s:appendix}} 

\subsection{General estimates\label{s:general}}

This section reduces the first exit problem 
in three consecutive steps
from a global problem to several 
smaller problems all of which are local in nature. 
We start by showing that 
the perturbed dynamics is 
dominated by the deterministic dynamics 
plus the error term of the perturbation. 
This is carried out for rather general perturbations 
by stochastic processes given as semimartingales, 
which include the stochastic integrals 
$\e \int_0^t F(X^\e_s)(0-) \, \di Z(s)$ under consideration in \eqref{e:SDDE}. 
In the sequel we show that the solution driven only by bounded jumps, 
that is, before a first large jump happens  
remains close to the deterministic solution. 
Finally we establish upper and lower bounds 
for the distribution tails and the expectation 
of the exit times (including exit locations) 
for general Markov processes provided we have enough control over 
the short term behavior, which is left for Section~\ref{s:Main estimates} 
in order to conclude.

\subsubsection{Estimates on the perturbed dynamics}

For generalities of semimartingales we refer to the book by \cite{Protter-04}. 
In general, semimartingales are the class of stochastic It\^o integral processes. 

\begin{lem}
\label{l:est}
Let $S$ be a c\`adl\`ag semimartingale, $S(0)=0$, $\phi\in D[-r,0]$ and
let $X$ be a stochastic process satisfying 
\ba
X(t,\phi)&=\phi(0)+\int_0^t\Big[\int_{[-r,0]} X(s+u,\phi)\,\mu(\di u) \Big]\,\di s + S(t), \qquad t\geq 0,\\
X(t,\phi)&=\phi(t),\quad t\in[-r,0).
\ea
Then for each $\lambda\in (0,\Lambda)$ there is $C=C(\lambda,\mu,r)>0$ such that for $t\geq 0$
\begin{align}
\label{e:est1}
&|X(t,\phi)| \leq C\cdot\Big(\|\phi\|_r\cdot \ex^{-\lambda t}+ \sup_{s\in[0,t]}|S(s)|\Big),\\
\label{e:est2}
&|X(t,\phi)-\phi(0)x^*(t)|\leq C \cdot\Big(\sup_{s\in[-r,0)}|\phi(s)| \cdot \ex^{-\lambda t} +   \sup_{s\in[0,t]}|S(s)|\Big),\\
\label{e:est3}
& |X(t,\phi)-x(t,\phi)|\leq C \cdot \sup_{s\in[0,t]}|S(s)|.
\end{align}
\end{lem}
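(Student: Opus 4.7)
The plan is to use the variation-of-constants representation
\[
X(t,\phi) = x(t;\phi) + \int_0^t x^\ast(t-s)\,\di S(s), \qquad t\geq 0,
\]
and then exploit the exponential decay of $x^\ast$ together with integration by parts. This representation can be verified by direct substitution into the defining integral equation (the deterministic part matches \eqref{e:conv} exactly, and the stochastic part vanishes at $t=0$ and contributes precisely the semimartingale term after an application of the stochastic Fubini theorem), or, more abstractly, by specializing the Reiss--Riedle--van Gaans convolution formula \eqref{eq: convolution formula} to the additive-noise case with $F\equiv 1$ and $\e=1$.

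Once this representation is in hand, I would control the stochastic integral via integration by parts. Since $x^\ast$ is continuous on $[0,\infty)$ and piecewise smooth (as the solution of a linear delay equation starting from $x^\ast(0)=1$), the map $s\mapsto x^\ast(t-s)$ on $[0,t]$ is continuous and of finite variation, so the semimartingale integration-by-parts formula gives
\[
\int_0^t x^\ast(t-s)\,\di S(s) = x^\ast(0)\,S(t) + \int_0^t S(s)\,\dot x^\ast(t-s)\,\di s,
\]
using $S(0)=0$ and the vanishing of the quadratic covariation between $S$ and the continuous BV kernel. The delay identity $\dot x^\ast(t)=\int_{[-r,0]}x^\ast(t+u)\,\mu(\di u)$ for $t\geq 0$, combined with \eqref{e:K}, shows that $\dot x^\ast$ also decays exponentially, hence $\int_0^\infty|\dot x^\ast(u)|\,\di u<\infty$, and therefore
\[
\Big|\int_0^t x^\ast(t-s)\,\di S(s)\Big|\leq C_1\sup_{s\in[0,t]}|S(s)|
\]
for a constant $C_1=C_1(\lambda,\mu,r)$. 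This immediately gives \eqref{e:est3}.

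For \eqref{e:est2} I would split the deterministic part of \eqref{e:conv} as $x(t;\phi)=\phi(0)x^\ast(t)+R(t,\phi)$, where $R(t,\phi)=\int_{[-r,0]}\int_u^0 x^\ast(t-s+u)\phi(s)\,\di s\,\mu(\di u)$ carries the contribution of the initial segment on $[-r,0)$. Since $s\in[u,0]$ and $u\in[-r,0]$ force $t-s+u\geq t-r$, the bound \eqref{e:K} yields
\[
|R(t,\phi)|\leq K\,|\mu|([-r,0])\,r\,e^{\lambda r}\sup_{s\in[-r,0)}|\phi(s)|\,e^{-\lambda t},
\]
which together with the bound on the stochastic integral proves \eqref{e:est2}. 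Finally, \eqref{e:est1} follows by adding $|\phi(0)|\,|x^\ast(t)|\leq K\|\phi\|_r e^{-\lambda t}$ to \eqref{e:est2} and enlarging the constant.

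The main obstacle is the careful justification of the variation-of-constants formula and the integration-by-parts step for a genuine c\`adl\`ag semimartingale $S$: one must check that the pathwise Riemann--Stieltjes interpretation of $\int_0^t x^\ast(t-s)\,\di S(s)$ coincides with the It\^o integral, and that the continuity of the BV kernel on $[0,t]$ eliminates any quadratic-variation correction. Both points are standard, but deserve explicit mention since $x^\ast$ itself has a unit jump at $t=0$ (from $x^\ast(0-)=0$ to $x^\ast(0)=1$); this jump lies outside the integration range $(0,t]$ and therefore does not produce a boundary term.
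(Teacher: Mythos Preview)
Your argument is correct and is actually more direct than the paper's. The paper does \emph{not} work with the convolution integral $\int_0^t x^\ast(t-s)\,\di S(s)$ at all. Instead it decomposes $X(t;\phi)=x(t;\phi)+X(t;0)$ and then compares $X(\cdot;0)$ to an auxiliary Ornstein--Uhlenbeck process $U(t)=-\lambda\int_0^t U(s)\,\di s+S(t)$, whose explicit solution $U(t)=\int_0^t e^{-\lambda(t-s)}\,\di S(s)$ is bounded by $2\sup_{[0,t]}|S|$ after integration by parts; the residual $V=X(\cdot;0)-U$ then solves a delay equation with absolutely continuous forcing and is controlled through the convolution formula and \eqref{e:K}. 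Your route bypasses this detour: you integrate by parts \emph{once} against the delay kernel $x^\ast$ itself, using that $\dot x^\ast$ is integrable, and read off all three estimates from the single identity $X(t,\phi)=\phi(0)x^\ast(t)+R(t,\phi)+\int_0^t x^\ast(t-s)\,\di S(s)$. The paper's approach has the mild advantage of never needing to justify the stochastic convolution formula for a general semimartingale $S$ (it works entirely at the level of the defining equation), whereas you must appeal to \eqref{eq: convolution formula} or verify it by hand; conversely, your argument is shorter, avoids the artificial OU comparison, and makes the dependence of the constant $C$ on $K$, $|\mu|$, $r$, $\lambda$ completely transparent.
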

\noindent Inequality \eqref{e:est1} estimates the growth of the perturbed solution $X$ in terms of the size of the 
initial segment and the perturbation $S$, whereas \eqref{e:est2} quantifies the memory effect in the initial segment and the 
noise. Inequality \eqref{e:est3} controls the deviation caused by the noise alone.
\begin{proof}
Let $\lambda\in (0,\Lambda)$ and the corresponding $K=K(\lambda)$ in estimate \eqref{e:K} be fixed.

\noindent
1. By linearity we note that the 
the difference $Y(t;\phi)=X(t;\phi)-X(t;0)$ satisfies the homogeneous deterministic delay equation \eqref{e:de}, that is, 
$Y(t;\phi)=x(t;\phi)$ for all $t\geq -r$, and hence the convolution formula \eqref{e:conv} immediately implies
\ba
\label{e:C1}
|Y(t)|&\leq K |\phi(0)|\cdot \ex^{-\lambda t}+ K \int_{[-r,0]}\Big[ \int_u^0\Big[ \ex^{-\lambda (t-s+u)}\cdot |\phi(s)|\,\di s\Big]\,|\mu(\di u)|\\
&\leq K\cdot  |\phi(0)|\cdot \ex^{-\lambda t}+ K\cdot \frac{(e^{\la r}-1)}{\la} \cdot \|\phi\|_r \cdot |\mu|[-r, 0] \cdot \ex^{-\lambda t}\\[2mm]
&\leq C_1(\lambda,r,\mu) \|\phi\|_r\cdot \ex^{-\lambda t},
\ea
where $|\mu|$ stands for the standard total variation measure of the finite signed measure $\mu$.

\noindent Now we apply the stability of the unperturbed system comparing $X(\cdot;0)$ 
to the following Ornstein--Uhlenbeck type process $U$ given by  
\ba
U(t) = \begin{cases} 
-\displaystyle\lambda \int_0^t U(s)\, \di s + S(t) & \mbox{ for } t\geq  0, \\ 
0 &  \mbox{ for } t\in [-r, 0).
\end{cases}
\ea 
The process $U$ has the explicit solution 
\ba
U(t) = \int_0^t \ex^{-\la (t-s)}\,\di S(s), \quad t\geq 0. 
\ea
Integration by parts then yields 
$U(t)= S(t) - \lambda \ex^{-\lambda t} \int_0^t S(s)\ex^{\lambda s}\,\di s$, such that  
\ba\label{e:U final}
|U(t)|&\leq  2\sup_{s\in[0,t]}|S(s)|.
\ea
We fix the notation $V(t) := X(t;0) - U(t)$, $t\geq -r$, and note that $X(\cdot;0) = V + U$. 
It remains to estimate $V$. Collecting the absolutely continuous parts as 
\ba
A(t) := \int_0^t \Big[\int_{[-r, 0]} U(s+u)\, \mu(\di u)\Big]\, \di s - \la \int_0^t U(s)\, \di s
\ea
we obtain the following equation for $V$ 
\ba
V(t) 
&= X(t;0) - U(t) \\
&= \int_0^t \int_{[-r, 0]} \Big(X(s+u;0) - U(s+u)\Big)\, \mu(\di u)\, \di s 
+ \int_0^t \int_{[-r, 0]} U(s+u)\, \mu(\di u)\, \di s - \la \int_0^t U(s)\, \di s\\
&= \int_0^t \int_{[-r, 0]} V(s+u)\, \mu(\di u) \, \di s + A(t), 
\ea
the solution of which has the explicit convolution representation  
\ba
\label{e:V}
V(t) = \int_0^t x^*(t-s) A'(s)\, \di s, 
\ea
where 
\ba
\label{e:A'}
A'(t) = \int_{[-r, 0]} U(t+u)\, \mu(\di u) - \la U (t). 
\ea
Equation \eqref{e:V} and inequality \eqref{e:K} then yield the estimate 
\ba
|V(t)| \leq K \int_0^t \ex^{-\la (t-s)} |A'(s)| \,\di s,\quad t\geq 0.
% \lqq \frac{C_0}{\la_0} \sup_{s\in [0, t]} |A'(s)|. 
\ea
Furthermore, \eqref{e:A'} implies for some $C_2=C_2(\lambda,\mu,r)>0$
\ba
|A'(t)| 
&\leq |\mu|[-r, 0] \cdot  \sup_{u\in [-r, 0]} |U(t+u)|  + \la |U(t)| \\
&\leq \Big(|\mu|[-r, 0] +\la\Big) \sup_{u\in [-r, 0]} |U(t+u)|\\
&\leq C_2 \cdot \sup_{s \in [0, t]} |U(s)|.
\ea
Combining the two preceding inequalities results in a constant $C_3 = C_3(\la, K, \mu, r)>0$ such that 
\ba \label{e:V final}
|V(t)|& = K\cdot C_2  \cdot \sup_{s \in [0, t]} |U(s)| \cdot\int_0^t \ex^{-\la(t-s)}\,\di s
\leq \frac{K\cdot C_2}{\lambda}\cdot\sup_{s \in [0, t]} |U(s)| =C_3 \cdot \sup_{s \in [0, t]} |U(s)|
\ea
and \eqref{e:est1} follows as a combination of \eqref{e:C1}, \eqref{e:U final} and \eqref{e:V final}.

\smallskip
\noindent
2. We denote the perturbed fundamental solution with initial segment $\phi(0)\bI_{\{0\}}$ by $\tilde X$. It satisfies  
\ba
&\tilde X(t)=\phi(0)+\int_0^t\int\tilde X(s+u)\,\mu(\di u)\,\di s+S(t),\\
&\tilde X(t)=0,\quad t\in[-r,0).
\ea
Then
\ba
X(t;\phi)-\phi(0)x^*(t)=X(t;\phi)-\tilde X(t)+ \tilde X(t)-\phi(0)x^*(t) 
\ea
and $\tilde Y(t) :=X(t,\phi)-\tilde X(t)$ is the solution of 
\ba
&\tilde Y(t)=\int_0^t\Big[\int_{[-r,0]}\tilde Y(s+u)\,\mu(\di u)\Big]\,\di s,\\
&\tilde Y(t)=\phi(t),\quad t\in[-r,0).
\ea
Hence arguing as for the term $A$ in part 1.\ we get 
\ba
|\tilde Y(t)|\leq C_1\cdot \sup_{s\in[-r,0)}|\phi(s)|\cdot \ex^{-\lambda t}
\ea
with the same constant $C_1$ as in \eqref{e:C1}.
Analogously to the identification of $Y$ in part 1.\ we observe
\ba
\tilde X(t)-\phi(0)x^*(t)=X(t;0)=V(t)+U(t), 
\ea
where the processes $V$ and $U$ are already estimated in \eqref{e:U final} and \eqref{e:V final} of part 1. 
Inequality \eqref{e:est2} then follows by 
\ba
&|X(t,\phi)-\phi(0) x^*(t)| \leq |\tilde Y(t)|+ |V(t)|+|U(t)|
\leq C_1\sup_{[-r,0)}|\phi(s)|\ex^{-\lambda t}+2(C_3+1)\sup_{s\in[0,t]}|S(s)|.
\ea
3. Since $X(t;\phi)-x(t;\phi)=X(t;0)=V(t)+U(t)$, estimate \eqref{e:est3} follows immediately by the previous results.
\end{proof}

\bigskip 
\subsubsection{Estimates on the stochastic perturbation}

Let us rewrite the underlying L\'evy process $Z$ 
as a sum of a compound Poisson process
\ba
\eta^\rho(t) &=\sum_{s\leq t}\Delta Z(s)\bI(|\Delta Z(s)|>\rho), \qquad t\geq 0, 
\ea
whose jumps  are larger than some threshold $\rho> 1$ in absolute value, and an independent 
L\'evy process $\xi^\rho=Z-\eta^\rho$ with bounded jumps. This is always possible and can be easily seen 
by comparison of the L\'evy--Khintchine formula \eqref{e:LKZ} for $Z$ 
with those for $\eta^\rho$ and $\xi^\rho$
\ba
\label{e:LKeta}
&\ln \E \ex^{\i u \eta^\rho(t)}=t\int_{|z|>\rho} (\ex^{\i uz}-1)\,\nu(\di z),\\
&\ln \E \ex^{\i u \xi^\rho(t)}
=-\frac{\sigma^2}{2}u^2t +\i\,d_\rho u t+t\int_{|z|\leq \rho}(\ex^{\i uz}-1-\i u z)\,\nu(\di z),\\
\ea
where the new drift $d_\rho = d + \int_{1<|z|\leq \rho} z\,\nu(\di z)$. We denote the jump times and sizes of $\eta^\rho$ by $\{\tau_k\}_{k\geq 1}$,
and  $\{J_k\}_{k\geq 1}$ respectively and recall that they are independent. Moreover, the interjump times $\tau_1,\tau_2-\tau_1$,\dots, are iid
exponentially distributed random variables with the parameter
\ba
\beta_\rho=\int_{|z|>\rho}\nu(\di z),
\ea
and the jump sizes $\{J_k\}_{k\geq 1}$ are also iid with the probability law 
\ba
\P(J_k\in A)=\frac{1}{\beta_\rho}\int_{|z|>\rho}\bI_A(z)\,\nu(\di z),\quad A\in\rB(\bR).
\ea

\noindent Let $X^\e=X^\e(\cdot;\phi)$ be the solution to the delay SDDE \eqref{e:SDDE}, and
consider the stochastic integral process 
\ba\label{e:stochint}
S^{\e,\rho}(t)=\e\int_0^t F(X^\e_s)(0-)\,\di \xi^\rho(s).  
\ea

\begin{rem}
In this article we are interested in the first exit time of $X^\e(t)$ from the interval $[a, b]$, 
which coincides by definition by the first exit time of $X^\e_t$ in segment space from 
the segment interval $\ldbrack a, b\rdbrack _r$. By the Lipschitz continuity \eqref{eq:functionalLipschitz} we have 
\[
\sup_{\phi \in \ldbrack a, b\rdbrack _r} \|F(\phi)\|_r \leq  L (\|F(0)\|_r + \sup_{\phi \in \ldbrack a, b\rdbrack _r} \|\phi\|_r).
\]
That is, before the first exit $t\leq \tau$ 
the coefficient of (\ref{e:stochint}) is bounded by 
\[\|F(X^\e)(t-)\|_r \leq L(\|F(0)\|_r + \max\{|a|, b\}) < \infty.\] 
Therefore it is without a loss of generality if we assume that $F$ is uniformly bounded, that is, 
$\sup_{\phi \in D[-r, 0]} \|F(\phi)\|_r \leq C_F < \infty$ for some global constant $C_F>0$.
\end{rem}

\begin{lem}
\label{l:M}
Let $F$ be uniformly bounded by a constant $C_F>0$. 
Then for any $\rho>1$, $T>0$, $\delta>0$ and any $p>1$ there is a constant $C_S>0$ such that
for any $0\leq s\leq t\leq T$ and $\e>0$ sufficiently small we have 
\ba
\P\Big(\sup_{u\in[0,t-s]}|S^{\e,\rho}(s+u)- S^{\e,\rho}(s)  |>\delta \Big)\leq C_S\e^p. % ,\quad \e\to 0.
\ea
\end{lem}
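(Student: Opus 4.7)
The plan is to decompose $\xi^\rho$ into its drift, Brownian, and compensated bounded-jump components via the Lévy--It\^o decomposition read off from the characteristic exponent \eqref{e:LKeta}, namely
\[
\xi^\rho(t) = d_\rho t + \sigma W(t) + \int_0^t\!\!\int_{|z|\leq \rho} z\, \tilde N(\di s, \di z),
\]
where $\tilde N$ is the compensated Poisson random measure. Correspondingly $S^{\e,\rho}$ splits as a sum of three integrals. By stationarity of increments of $\xi^\rho$ it suffices to estimate the three pieces on $[0, T]$ with the sup taken over that entire interval. The point is that all jumps of $\xi^\rho$ are bounded by $\rho$, so $\xi^\rho$ has finite moments of every order and, since $F$ is uniformly bounded by $C_F$, the integrands are uniformly bounded too; the desired $\e^p$ bound will come from pulling $\e$ out and applying Markov's inequality to suitably high moments.

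For the \textbf{drift part} one has
\[
\Big|\e d_\rho \int_s^{s+u} F(X^\e_r)(0-)\,\di r\Big| \leq \e |d_\rho| C_F T,
\]
which is less than $\delta/3$ for all $\e$ small enough, so this piece contributes probability zero. For the \textbf{Brownian part} $M^W(t) = \e\sigma \int_0^t F(X^\e_r)(0-)\,\di W(r)$, the Burkholder--Davis--Gundy inequality gives for any integer $q\geq 1$
\[
\E \sup_{u\in[0,T]} |M^W(u)|^{2q} \leq C_q \,\e^{2q}\sigma^{2q}\, \E\Big(\!\int_0^T F(X^\e_r)(0-)^2\,\di r\Big)^{\!q} \leq C_q'\,\e^{2q}.
\]
For the \textbf{compensated jump part} $M^J(t) = \e\int_0^t\!\int_{|z|\leq\rho} z F(X^\e_{r})(0-)\,\tilde N(\di r,\di z)$, Kunita's maximal inequality (see \cite{Applebaum-09}) yields for $q\geq 2$
\[
\E\sup_{u\in[0,T]}|M^J(u)|^{q} \leq C_q\,\e^{q}\Big[\Big(\!\int_{|z|\leq\rho}\!\!z^2\nu(\di z)\Big)^{\!q/2}\!T^{q/2} + \int_{|z|\leq\rho}\!\!|z|^q\nu(\di z)\,T\Big]C_F^q,
\]
and both integrals over $\{|z|\leq\rho\}$ are finite because $|z|^q\leq \rho^{q-2}z^2$ for $q\geq 2$.

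Given $p>1$, choose an integer $q$ with $q\geq 2$ and $q\geq p$. Markov's inequality applied to each of the three terms with threshold $\delta/3$ gives probabilities bounded by a constant times $\e^q$, and for $\e\in(0,1]$ one has $\e^q \leq \e^p$. A union bound then yields
\[
\P\Big(\sup_{u\in[0,t-s]} |S^{\e,\rho}(s+u)-S^{\e,\rho}(s)|>\delta\Big) \leq C_S \,\e^p
\]
for a constant $C_S = C_S(\rho, T, \delta, p, C_F,|d_\rho|,\sigma,\nu)$. There is no serious obstacle here; the only care needed is in invoking the correct maximal inequality for the jump integral, since the jumps of $\xi^\rho$ are not square-integrable with arbitrary exponent unless one uses Kunita's $L^q$-bound rather than a naive BDG-for-continuous-martingales estimate.
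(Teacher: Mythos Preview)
Your argument is correct and follows essentially the same route as the paper: remove the drift (which is deterministically below $\delta/3$ for small $\e$), then control the remaining martingale part by a maximal/BDG-type moment bound combined with Markov's inequality. The only cosmetic difference is that the paper keeps the Brownian and compensated-jump pieces together as a single martingale $\tilde\xi^\rho=\xi^\rho-d_\rho\cdot$ and applies one BDG estimate, whereas you split them and invoke Kunita's inequality for the jump part; also, your phrase ``by stationarity of increments of $\xi^\rho$'' is slightly loose since the integrand $F(X^\e_\cdot)(0-)$ is path-dependent, but the conclusion is unaffected because all your bounds depend only on $C_F$ and $T$ and are therefore uniform in $s$.
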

\begin{proof}
Note that $\tilde \xi^\rho(t)=\xi^\rho(t)-d_\rho t$, $t\geq 0$, is a martingale with bounded jumps,
as well as for fixed $s\geq 0$ the stochastic integral process $\tilde S^{\e,\rho}(s+u)- \tilde S^{\e,\rho}(s)  
=\e\int_s^{s+u} F(X^\e_v)(0-)\,\di \tilde\xi^\rho(v)$, $u\geq 0$.
Then the triangle inequality, the Markov inequality, and the classical Burkholder-Davis-Gundy inequality for $p>1$ 
(see for instance \cite{Protter-04}, Theorem 4.8) 
yield a constant $C_p>0$ such that for 
$\e>0$ being sufficiently small it follows 
\ba
\P\Big(\sup_{u\in[0,t-s]}&|S^{\e,\rho}(s+u)- S^{\e,\rho}(s)  |>\delta\Big)\\
&\leq \P\Big( \e\cdot C_F\cdot |d_\rho|\cdot T>\frac{\delta}{2}\Big)
+ \P\Big(  \sup_{u\in[0,t-s]}\Big|\e\int_s^{s+u} F(X^\e_v)(0-)\,\di \tilde\xi^\rho(v) \Big|>\frac{\delta}{2}\Big)\\
&\leq 0+  \e^p\cdot \frac{2^p }{\delta^p} ~\E \sup_{u\in[s, t-s]}\Big|\int_s^{s+u} F(X^\e_v)(0-)\,\di \tilde\xi^\rho(v)\Big|^p\\
&\leq \e^p\cdot \frac{2^pC_p}{\delta^p} T^{p/2}\cdot C_F^p\cdot \Big(\sigma^2+ \int_{|z|\leq \rho} z^2\,\nu(\di z)\Big)^{p/2}
=C_S\cdot \e^p.
\ea
\end{proof}

\bigskip
\subsection{General segment Markov estimates\label{s:markov}}

Let $X$ be a c\`adl\`ag segment Markov process as given in \eqref{e:MP} 
and denote for $a<0<b$ the first exit time of $X$ from $[a, b]$ by 
\ba
\tau=\tau(\phi)=\inf\{t\geq 0\colon X(t)\notin [a,b]\}.
\ea

\begin{lem}
\label{l:b}
If for some $T\geq r>0$, $m_1$, $m_2>0$ satisfying $m_1 T\leq m_2 T<1$ and $p_1(B)$, $p_2(B)>0$ 
we have the short term estimates   
\begin{align}
&\inf_{\phi\in \ldbrack a,b\rdbrack_r}\P_\phi(\tau \leq T)\geq m_1 T,\label{e:uppershorterm1}\\
&\inf_{\phi\in \ldbrack a,b\rdbrack_r}\P_\phi(\tau \leq T, X(\tau)\in B)\geq m_1 T\cdot p_1(B),\label{e:uppershorterm2}\\
&\sup_{\phi\in \ldbrack a,b\rdbrack_r}\P_\phi(\tau \leq T, X(\tau)\in B)\leq m_2 T\cdot p_2(B),\label{e:uppershorterm3}\\
\end{align}
then for each $u\geq 0$ we have 
\begin{align}
&\sup_{\phi\in \ldbrack a,b\rdbrack_r}\P_\phi(\tau>u)
\leq \frac{\ex^{-u m_1}}{1- m_1 T},\label{e:upperlongerm1}\\
&\sup_{\phi\in \ldbrack a,b\rdbrack_r}\P_\phi(\tau>u,X(\tau)\in B)
\leq \frac{\ex^{-u m_1}}{(1- m_1 T)^2}\cdot \Big(\frac{m_2}{m_1}p_2(B)-m_1Tp_1(B)\Big),\label{e:upperlongerm2}\\
&\sup_{\phi\in \ldbrack a,b\rdbrack_r}\P_\phi(X (\tau )\in B)
\leq \frac{ m_2 }{m_1 } \cdot p_2(B) \label{e:upperlongerm3}
\end{align}
and 
\ba
\sup_{\phi\in \ldbrack a,b\rdbrack_r} \E_\phi \tau &\leq\frac{1}{m_1}.
\ea
\end{lem}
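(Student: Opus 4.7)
The plan is to exploit the segment Markov property \eqref{e:MP} iteratively at the deterministic times $T, 2T, 3T, \dots$, reducing all four long-time estimates to the uniform short-time bounds \eqref{e:uppershorterm1}--\eqref{e:uppershorterm3}. I set $q := \sup_{\phi \in \ldbrack a, b\rdbrack_r} \P_\phi(\tau > T) \leq 1 - m_1 T < 1$. By the segment Markov property applied at time $nT$,
\ba
\P_\phi(\tau > (n+1)T) = \E_\phi\bigl[\bI(\tau > nT)\, \P_{X_{nT}}(\tau > T)\bigr] \leq q\cdot \P_\phi(\tau > nT),
\ea
whence inductively $\P_\phi(\tau > nT) \leq q^n \leq (1-m_1T)^n$. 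For general $u \geq 0$ I write $u = nT + s$ with $n = \lfloor u/T \rfloor$ and $s\in[0,T)$, use monotonicity in $u$, and translate into the exponential bound \eqref{e:upperlongerm1} via $(1-m_1T)^{u/T}\leq \ex^{-m_1 u}$ (from $\ln(1-x)\leq -x$) at the cost of the factor $1/(1-m_1T)$ absorbed in the $n \mapsto u/T$ rounding.

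For the expected exit time I use the layer-cake representation together with the geometric bound just obtained:
\ba
\E_\phi \tau = \int_0^\infty \P_\phi(\tau > u)\,\di u \leq \sum_{n=0}^\infty T\cdot \P_\phi(\tau > nT) \leq T \sum_{n=0}^\infty (1-m_1 T)^n = \frac{1}{m_1},
\ea
which avoids the $1/(1-m_1T)$ prefactor by summing the crude geometric upper bound directly rather than passing through the exponential form. For \eqref{e:upperlongerm3} I partition $\{X(\tau)\in B\} = \bigsqcup_{n\geq 0}\{\tau \in (nT,(n+1)T],\ X(\tau)\in B\}$ and apply the Markov property at $nT$, paired with \eqref{e:uppershorterm3}, to get
\ba
\P_\phi(\tau \in (nT,(n+1)T],\ X(\tau)\in B) \leq (1-m_1T)^n\cdot m_2 T\, p_2(B),
\ea
which sums as a geometric series to $\frac{m_2}{m_1}p_2(B)$.

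The main obstacle, and the only step where all three short-time hypotheses enter nontrivially, is the joint tail \eqref{e:upperlongerm2}: a naive iteration yields only the constant $\frac{m_2}{m_1}p_2(B)$ as the prefactor, missing the subtractive $m_1 T p_1(B)$ term. To produce it I single out one renewal interval of length $T$. For $u \geq T$ I use the Markov property at time $T$ to write
\ba
\P_\phi(\tau > u,\ X(\tau)\in B) \leq (1-m_1T)\,\sup_\psi \P_\psi\bigl(\tau > u-T,\ X(\tau)\in B\bigr),
\ea
iterate $n$ times with $u = (n+1)T + s$, $s\in[0,T)$, and then bound the residual by
\ba
\sup_\psi \P_\psi(\tau > T,\ X(\tau)\in B)
&= \sup_\psi\bigl[\P_\psi(X(\tau)\in B) - \P_\psi(\tau\leq T,\ X(\tau)\in B)\bigr]\\
&\leq \sup_\psi \P_\psi(X(\tau)\in B) - \inf_\psi \P_\psi(\tau\leq T,\ X(\tau)\in B)\\
&\leq \tfrac{m_2}{m_1}p_2(B) - m_1 T p_1(B),
\ea
combining the already established \eqref{e:upperlongerm3} with the lower bound \eqref{e:uppershorterm2}. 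This produces the claimed bracket. The extra factor $(1-m_1T)^{-2}$ then appears by the same rounding argument as in step one, which costs one power of $(1-m_1T)^{-1}$ to convert $n$ into $u/T$ and another to absorb the residual interval of length $T$ separated off at the start; the small $u\in[0,T)$ case is handled by monotonicity and is dominated by the $u=0$ value of the right-hand side.
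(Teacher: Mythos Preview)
Your argument is correct and follows essentially the same route as the paper: iterate the segment Markov property at the grid $T,2T,\dots$ to get geometric decay from \eqref{e:uppershorterm1}, sum over renewal intervals for \eqref{e:upperlongerm3} and for $\E_\phi\tau$, and obtain the subtractive bracket in \eqref{e:upperlongerm2} via the identity $\P_\psi(\tau>T,X(\tau)\in B)=\P_\psi(X(\tau)\in B)-\P_\psi(\tau\leq T,X(\tau)\in B)$ combined with the already established \eqref{e:upperlongerm3} and the lower bound \eqref{e:uppershorterm2}. The only imprecision is your last sentence: for $u\in[0,T)$, saying the left-hand side is ``dominated by the $u=0$ value of the right-hand side'' does not yield the bound at $u$, since the right-hand side is decreasing in $u$; what actually closes this case is $\P_\phi(X(\tau)\in B)\leq \tfrac{m_2}{m_1}p_2(B)$ together with the elementary inequality $(1-m_1T)^2\ex^{m_1u}\leq 1-m_1T$ for $u\leq T$ and the relation $p_1(B)\leq \tfrac{m_2}{m_1}p_2(B)$ (which follows from \eqref{e:uppershorterm2}--\eqref{e:uppershorterm3}).
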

\begin{proof}
We first show \eqref{e:upperlongerm2}. For each $u>0$ denote $k=\lfloor\frac{u}{T}\rfloor$. Then $kT\leq u$ and 
\ba\label{e:dicret}\P_\phi(\tau>u, X(\tau)\in B) \leq \P_\phi(\tau> kT , X(\tau)\in B).\ea
The segment Markov property 
and \eqref{e:uppershorterm1} yield for any initial condition $\phi\in \ldbrack a,b\rdbrack_r$ 
\ba\label{e:segMark}
% \P_\phi(\tau>u, X(\tau)\in B)&\leq 
\P_\phi(\tau> kT , X(\tau)\in B)
&=\E_\phi \Big[ \bI(\tau> kT, X(\tau)\in B)\cdot \bI(\tau>(k-1)T)   \Big]\\
&=\E_\phi \E\Big[ \bI(\tau> kT, X(\tau)\in B)\cdot \bI(\tau>(k-1)T) \Big|\rF_{(k-1)T}  \Big]\\
&=\E_\phi \Big[ \bI(\tau>(k-1)T)\cdot  \E\Big[ \bI(\tau> kT, X(\tau)\in B)\cdot\Big|\rF_{(k-1)T}  \Big]\Big]\\
&=\E_\phi \Big[ \bI(\tau>(k-1)T)\cdot  \E_{X_{(k-1)T}}\Big[ \bI(\tau> T, X(\tau)\in B)\Big]\Big]\\
&\leq \P_\phi \Big(\tau>(k-1)T)\Big)\cdot \sup_{\psi\in \ldbrack a,b\rdbrack_r} \P_\psi(\tau> T, X(\tau)\in B)\\
&\leq \Big[1-\inf_{\phi\in \ldbrack a,b\rdbrack_r} \P_\phi(\tau\leq  T)\Big]^{k-1} 
\cdot \sup_{\psi\in \ldbrack a,b\rdbrack_r} \P_\psi(\tau> T, X(\tau)\in B)\\
&\leq (1-m_1T)^{k-1} 
\cdot \sup_{\psi\in \ldbrack a,b\rdbrack_r} \P_\psi(\tau> T, X(\tau)\in B).
% \\   
% &\leq (1-m T)^k=\ex^{k\ln(1-m T)}\leq \frac{\ex^{-km T}}{1-m T}\leq \frac{\ex^{-um}}{1-m T}
\ea
While the first term satisfies 
\ba\label{e:expapprox}
(1-m_1T)^{k-1} = \frac{(1- \frac{m_1 k T}{k})^k}{1-m_1T} \leq \frac{(1- \frac{m_1 u}{k})^k}{1-m_1T} \leq \frac{\ex^{-m_1 u}}{1-m_1T}.
\ea
The last inequality is given by $\ex^{-m_1 u} - (1-m_1 u/k)^k\geq 0$ for all $m_1 u\leq k$. 
The latter condition is satisfied since $m_1 k T \leq m_1 u \leq k$, which is a consequence of $m_1 T < 1$ in the statement. 
We rewrite the last term on the right side of \eqref{e:segMark} as 
\ba\label{e:rewrite}\P_\phi(\tau> T, X(\tau)\in B) = \P_\phi(X(\tau)\in B) - \P_\phi(\tau\leq T, X(\tau)\in B).\ea 
While the second term is estimated by \eqref{e:uppershorterm2} 
we calculate the first summand, which is \eqref{e:upperlongerm3}, 
with the segment Markov property and \eqref{e:uppershorterm1} 
\ba\label{e:location}
\P_\phi(X(\tau)\in B)&=\sum_{k=1}^\infty \P_\phi(X (\tau)\in B,(k-1)T <\tau \leq kT)\\
&=\sum_{k=1}^\infty \E_\phi \E \Big[ \bI((k-1)T <\tau, X(\tau )\in B,\tau \leq kT)\Big|\rF_{(k-1)T}\Big]\Big]\\
&=\sum_{k=1}^\infty \E_\phi \E \Big[ \bI((k-1)T <\tau, X(\tau )\in B,\tau^\e\leq kT)\Big|\rF_{(k-1)T}\Big]\Big]\\
&=\sum_{k=1}^\infty \E_\phi \bI((k-1)T <\tau)\cdot \P_{X_{(k-1)T}} (X(\tau )\in B,\tau^\e\leq T)\Big]\\
&\leq \sup_{\psi\in\ldbrack a,b\rdbrack_r} \P_\psi (X(\tau )\in B,\tau\leq T)\cdot \sum_{k=1}^\infty \P_\phi((k-1)T <\tau)\\
&\leq  m_2\cdot T\cdot p_2(B)  \cdot  \sum_{k=1}^\infty \Big(1- m_1T\Big)^{k-1}\\
&=\frac{ m_2 }{m_1 } \cdot p_2(B).
\ea
The estimates \eqref{e:rewrite} and \eqref{e:location} yield  
\ba\label{e:lastterm}
 \P_\phi(\tau> T, X(\tau)\in B) \leq m_2T\cdot p_2(B)\cdot \frac{1}{m_1T} -m_1Tp_1(B) 
\ea
and \eqref{e:dicret}-\eqref{e:expapprox} with \eqref{e:lastterm} imply \eqref{e:upperlongerm2}.
Setting $B=\bR$ estimate \eqref{e:upperlongerm1} follows directly from \eqref{e:dicret}-\eqref{e:expapprox}.
Eventually 
\ba
\E_\phi \tau &\leq T \sum_{k=1}^\infty k\cdot \P_\phi((k-1)T <\tau\leq kT)=T \sum_{k=0}^\infty k\cdot \P_\phi( \tau >Tk)\leq T \sum_{k=0}^\infty (1-m_1T)^k =\frac{1}{m_1}. 
\ea
\end{proof}

\begin{lem}
\label{l:a}
Let $B$ be a Borel set. If for some $T\geq r>0$, $m_1, m_2>0$ satisfying $m_1 T \leq m_2 T <1$, 
$p_1(B), p_2(B) \leq 1$ and $\delta\in(0,|a|\wedge b)$ we have the short term estimates 
\begin{align}
% &\inf_{\|\phi\|_r\leq \delta} \P_\phi(\tau > T,X(\tau)\in B, \|X_T\|_r\leq \delta)\geq (1-m T)\cdot p(B),\\
&\sup_{\|\phi\|_r\leq \delta} \P_\phi(\tau \leq T,X(\tau)\in B )\leq m_1 T\cdot p_1(B),\label{e:lowershorterm1}\\
&\inf_{\|\phi\|_r\leq \delta} \P_\phi(\tau \leq  T , X(\tau)\in B)\geq m_2 T\cdot p_2(B),\label{e:lowershorterm2}\\
&\inf_{\|\phi\|_r\leq \delta} \P_\phi(\tau > T , \|X_T\|_r\leq \delta)\geq 1-m_1 T,\label{e:lowershorterm3}
\end{align}
then for each $u\geq 0$ we get 
\begin{align}
&\inf_{\|\phi\|_r\leq \delta}\P_\phi(\tau>u,X(\tau)\in B)\geq (1-m_1 T)\ex^{-\frac uT\ln(1- m_1 T)}\cdot \Big(\frac{m_2}{m_1}\cdot p_2(B) - m_1 T p_1(B)\Big),\label{e:lowerlongerm1}\\
&\inf_{\|\phi\|_r\leq \delta}\P_\phi(X(\tau)\in B)\geq \frac{m_2}{m_1} p_2(B).\label{e:lowerlongerm2}\\
&\inf_{\|\phi\|_r\leq \delta}\E_\phi \tau \geq \frac{1-m_1 T}{m_1 }.\label{e:lowerlongerm3}
\end{align}
\end{lem}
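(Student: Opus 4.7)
The plan is to mirror the proof of Lemma \ref{l:b}, but extracting lower rather than upper bounds from the segment Markov property iterated on blocks of length $T$. The crucial new ingredient is hypothesis (\ref{e:lowershorterm3}), which guarantees that with high probability the segment $X_T$ remains in the $\delta$-ball, so the hypotheses can be reapplied after each iteration.

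I will first establish (\ref{e:lowerlongerm2}). Decomposing
\[
\P_\phi(X(\tau)\in B) \geq \P_\phi(\tau \leq T,\, X(\tau) \in B) + \P_\phi(\tau > T,\, \|X_T\|_r \leq \delta,\, X(\tau) \in B),
\]
conditioning on $\rF_T$ in the second term and using the segment Markov property yields the identity $\E_\phi[\bI(\tau>T,\,\|X_T\|_r\leq \delta)\,\P_{X_T}(X(\tau)\in B)]$. Invoking the short-term estimates (\ref{e:lowershorterm2}) and (\ref{e:lowershorterm3}) and setting $q := \inf_{\|\psi\|_r\leq \delta}\P_\psi(X(\tau)\in B)$ produces the self-referential bound
\[
q \geq m_2 T\, p_2(B) + (1-m_1 T)\, q,
\]
from which $q \geq (m_2/m_1)\, p_2(B)$ follows immediately by solving for $q$.

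Next, for (\ref{e:lowerlongerm3}), iterating (\ref{e:lowershorterm3}) $k$ times through the segment Markov property gives $\P_\phi(\tau > kT,\,\|X_{kT}\|_r\leq \delta) \geq (1-m_1 T)^k$, and the layer-cake lower bound $\E_\phi \tau \geq T\sum_{k\geq 1}\P_\phi(\tau > kT)$ combined with summing the resulting geometric series yields the claim. For (\ref{e:lowerlongerm1}), I choose an integer $k$ with $kT \geq u$, so that $\{\tau > kT\}\subset\{\tau > u\}$, and condition at time $(k-1)T$. The survival event $\{\tau > (k-1)T,\,\|X_{(k-1)T}\|_r\leq \delta\}$ has probability at least $(1-m_1 T)^{k-1}$ by the iterated (\ref{e:lowershorterm3}), while on this event the conditional probability of exiting into $B$ over the remaining block of length $T$ factors as
\[
\P_\psi(\tau > T,\, X(\tau)\in B) = \P_\psi(X(\tau)\in B) - \P_\psi(\tau \leq T,\, X(\tau)\in B) \geq \frac{m_2}{m_1}\, p_2(B) - m_1 T\, p_1(B),
\]
where the first term is estimated by the already-proved (\ref{e:lowerlongerm2}) and the second by the hypothesis (\ref{e:lowershorterm1}). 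Converting the discrete index $k$ back to the continuous time $u$ yields the exponential prefactor.

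The main obstacle will be the bookkeeping in (\ref{e:lowerlongerm1}): the conditioning time $(k-1)T$ must be chosen so that a full final block of length $T$ remains available, and one must then work with a \emph{difference} of two estimates, requiring simultaneously a lower bound on the total exit-to-$B$ probability and an upper bound on the short-term exit-to-$B$ probability, both valid uniformly on the same ball $\{\|\psi\|_r\leq\delta\}$. Once this decomposition is in place, the argument reduces to iterating the segment Markov property in tandem with the previously established conclusion (\ref{e:lowerlongerm2}).
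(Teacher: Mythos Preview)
Your proposal is correct and, for \eqref{e:lowerlongerm3} and \eqref{e:lowerlongerm1}, follows the paper's proof essentially verbatim: iterate the segment Markov property on blocks of length $T$ using hypothesis \eqref{e:lowershorterm3} to stay in the $\delta$-ball, and for \eqref{e:lowerlongerm1} split $\P_\psi(\tau>T,X(\tau)\in B)=\P_\psi(X(\tau)\in B)-\P_\psi(\tau\leq T,X(\tau)\in B)$ and bound the two pieces via \eqref{e:lowerlongerm2} and \eqref{e:lowershorterm1}.

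The one genuine difference is your derivation of \eqref{e:lowerlongerm2}. The paper decomposes $\{X(\tau)\in B\}$ over all time windows $\{(k-1)T<\tau\leq kT\}$, bounds each summand below by $m_2Tp_2(B)(1-m_1T)^{k-1}$ using \eqref{e:lowershorterm2} and the iterated \eqref{e:lowershorterm3}, and sums the geometric series. You instead condition only once at time $T$ to obtain the fixed-point inequality $q\geq m_2Tp_2(B)+(1-m_1T)q$ for $q=\inf_{\|\psi\|_r\leq\delta}\P_\psi(X(\tau)\in B)$, and solve for $q$. Both arguments are clean; yours is marginally shorter and avoids the explicit summation, while the paper's version makes the contribution of each time block visible and parallels the structure already used in Lemma~\ref{l:b}.
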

\begin{proof}
For each $u>0$ denote $k=\lceil\frac{u}{T}\rceil$. In particular $(k-1)T < u \leq kT$. Then
the segment Markov property and \eqref{e:lowershorterm3} 
yield for any initial condition $\|\phi\|_r\leq \delta$
\ba
\label{e:chain1}
\P_\phi(\tau>u) \geq \P_\phi(\tau> kT) &\geq \E_\phi \Big[ \bI(\tau> kT)\cdot \bI(\tau>(k-1)T)\cdot \bI( \|X_{(k-1)T}\|_r\leq \delta)   \Big]\\
&= \E_\phi \E \Big[ \bI(\tau> kT)\cdot \bI(\tau>(k-1)T)\cdot \bI( \|X_{(k-1)T}\|_r\leq \delta)  \Big|\rF_{(k-1)T}\Big]\\
&= \E_\phi \Big[  \bI(\tau>(k-1)T)\cdot \bI( \|X_{(k-1)T}\|_r\leq \delta) \E \Big[ \bI(\tau> kT) \Big|\rF_{(k-1)T}\Big]\Big]\\
&= \E_\phi  \bI(\tau>(k-1)T)\cdot \bI( \|X_{(k-1)T}\|_r\leq \delta) \E_{X_{(k-1)T}} \Big[ \bI(\tau> kT)\Big]\\
% &\geq\E_\phi  \bI(\tau>(k-1)T)\cdot \bI( \|X_{(k-1)T}\|_r\leq \delta) \inf_{\|\psi\|_r\leq \delta}\E_\psi \Big[ \bI(\tau> T)\Big]\\
&\geq  \P_\phi \Big(\tau>(k-1)T,\|X_{(k-1)T}\|_r\leq \delta  \Big)
\cdot \inf_{\|\psi\|_r\leq \delta} \P_\psi\Big(\tau> T\Big)\\    %   ,\|X_T\|_r\leq \delta )\\
&\geq 
\Big[\inf_{\|\psi\|_r\leq \delta} \P_\psi\Big(\tau> T,\|X_T\|_r\leq \delta \Big)\Big]^{k-1}
\cdot \inf_{\|\psi\|_r\leq \delta} \P_\psi \Big(\tau>T \Big)    \\ %,\|X_{T}\|_r\leq \delta \Big)\\
&\geq (1-m_1T)^{k}
\ea
and \eqref{e:lowerlongerm3} follows by 
\ba
\E_\phi \tau &\geq T \sum_{k=1}^\infty (k-1)\cdot \P_\phi((k-1)T <\tau\leq kT)\\
&=T \sum_{k=1}^\infty \P_\phi( \tau >Tk)\geq T \sum_{k=1}^\infty (1-m_1T)^k =\frac{1-m_1T}{m_1} . 
\ea
Furthermore, inequality \eqref{e:lowerlongerm2} is a result from 
\ba
\P_\phi(X(\tau)\in B)&=\sum_{k=1}^\infty \P_\phi(X (\tau)\in B,(k-1)T <\tau \leq kT)\\
&\geq \sum_{k=1}^\infty 
\E_\phi \Big[ \bI(\tau> (k-1)T )\cdot\bI(\|X_{(k-1)T}\|_r\leq \delta) \E \Big[\bI(X(\tau )\in B,\tau \leq kT)\Big|\rF_{(k-1)T}\Big]\Big]\\
&\geq \inf_{\|\psi\|_r\leq \delta } \P_\psi \Big( X(\tau )\in B,\tau\leq T\Big)
\cdot \sum_{k=1}^\infty \P_\phi\Big( \tau>(k-1)T,\|X_{(k-1)T}\|_r\leq \delta \Big)  \\
&\geq m_2T \cdot p_2(B)\cdot \sum_{k=1}^\infty (1-m_1T)^{k-1}\\
&=\frac{ m_2 }{m_1 } \cdot p_2(B). 
\ea
Finally, repeating the chain of inequalities \eqref{e:chain1} with the additional event $\{X(\tau)\in B\}$, we get 
\ba
\P_\phi(\tau>u,~&X(\tau)\in B)\geq \P_\phi(\tau> kT ,X(\tau)\in B)\\
&\geq \E_\phi \Big[ \bI(\tau> kT,X(\tau)\in B)\cdot \bI(\tau>(k-1)T)\cdot \bI( \|X_{(k-1)T}\|_r\leq \delta)   \Big]\\
&= \E_\phi \E \Big[ \bI(\tau> kT,X(\tau)\in B)\cdot \bI(\tau>(k-1)T)\cdot \bI( \|X_{(k-1)T}\|_r\leq \delta)  \Big|\rF_{(k-1)T}\Big]\\
% &= \E_\phi \Big[  \bI(\tau>(k-1)T)\cdot \bI( \|X_{(k-1)T}\|_r\leq \delta) \E \Big[ \bI(\tau> kT,X(\tau)\in B) \Big|\rF_{(k-1)T}\Big]\Big]\\
% &= \E_\phi  \bI(\tau>(k-1)T)\cdot \bI( \|X_{(k-1)T}\|_r\leq \delta) \E_{X_{(k-1)T}} \Big[ \bI(\tau> kT,X(\tau)\in B)\Big]\\
% &\geq\E_\phi  \bI(\tau>(k-1)T)\cdot \bI( \|X_{(k-1)T}\|_r\leq \delta) \inf_{\|\psi\|_r\leq \delta}\E_\psi \Big[ \bI(\tau> kT,X(\tau)\in B)\Big]\\
&\geq  \P_\phi \Big(\tau>(k-1)T,\|X_{(k-1)T}\|_r\leq \delta  \Big)
\cdot \inf_{\|\psi\|_r\leq \delta} \P_\psi\Big(\tau> T,X(\tau)\in B \Big)\\    %   ,\|X_T\|_r\leq \delta )\\
&\geq 
\Big[\inf_{\|\psi\|_r\leq \delta} \P_\psi\Big(\tau> T,\|X_T\|_r\leq \delta \Big)\Big]^{k-1}
\cdot \inf_{\|\psi\|_r\leq \delta} \P_\psi \Big(\tau>T,X(\tau)\in B\Big)    \\ %,\|X_{T}\|_r\leq \delta \Big)\\
&\geq (1-m_1T)^{k}\cdot \Big( \frac{m_2}{m_1}\cdot p_2(B)- m_1T\cdot p_1(B)   \Big),
\ea
and \eqref{e:lowerlongerm1} follows directly.
 \end{proof}

\subsection{Proof of the main estimates\label{s:Main estimates}}

The main result will follow directly from the following six inequalities.

\begin{lem}
\label{l:i1}
For any $\varkappa>0$  there is  $T>0$ and $\e_0>0$ such that for all $\e\in(0,\e_0]$
\begin{align}
\label{e:i1}
&\inf_{\phi\in \ldbrack a,b\rdbrack_r}\P_\phi(\tau^\e \leq T)\geq \lambda_\e \bar\nu (E) T(1-\varkappa).
\end{align}
\end{lem}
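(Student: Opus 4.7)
The plan is to obtain the lower bound by exhibiting a single explicit event whose probability is already at least $\lambda_\e \bar\nu(E) T (1-\varkappa)$, namely the event that $Z$ makes exactly one large jump inside a suitable sub-interval of $[0,T]$ and this jump is large enough (in the sense of the set $E$) so that the post-jump trajectory provably leaves $[a,b]$. The argument combines the deterministic stability (Lemma \ref{l:est}), the smallness of the truncated L\'evy integral (Lemma \ref{l:M}), and the regular variation of $\nu$ through \eqref{e:rv}.

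I would begin by fixing the scales. Because $\bar\nu$ has no atoms at $e_\pm$, one can choose a small margin $\delta_0>0$ such that the shrunk exit set $E^{\delta_0}:=\{|z|\geq (|e_-|\wedge e_+)+\delta_0\}\cap E$ still satisfies $\bar\nu(E^{\delta_0})\geq \bar\nu(E)(1-\varkappa/4)$. By $\textbf{H}_\mu$ and \eqref{e:K}, there exists a relaxation time $T_0>r$ such that $|x(t;\phi)|\leq \delta$ for all $t\geq T_0-r$ uniformly in $\phi\in \ldbrack a,b\rdbrack_r$, with $\delta\ll \delta_0$ chosen so that $2L\delta\,|F_0|^{-1}$ and the Lipschitz errors below are negligible compared to $\delta_0$. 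Let $s^*$ be a time at which $x^*$ attains the binding extremum in \eqref{e:epem}, and pick $T\geq 2(T_0+s^*)/\varkappa$ so that $(T-T_0-s^*)/T\geq 1-\varkappa/2$. Finally decompose $Z=\eta^\rho+\xi^\rho$ with $\rho=\rho(\e)\to\infty$ and $\e\rho\to 0$, so that for small $\e$ the set $E^{\delta_0}/\e$ lies in $\{|z|>\rho\}$.

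Next I define the good event $G$ by (i) $\eta^\rho$ has exactly one jump $\tau_1\in[T_0,T-s^*]$ and none on $[0,T_0]\cup(T-s^*,T]$; (ii) the jump size $J_1\in E^{\delta_0}/\e$; (iii) $\sup_{t\in[0,T]}|S^{\e,\rho}(t)|\leq \delta$. By Lemma \ref{l:est}\,(iii) applied on $[0,\tau_1)$ with driver $S^{\e,\rho}$, on $G$ the solution satisfies $|X^\e(t)-x(t;\phi)|\leq C\delta$, so $\|X^\e_{\tau_1-}\|_r\leq (C+1)\delta$. The Lipschitz property \eqref{eq:functionalLipschitz} together with $F(0)(0-)=F_0$ gives $|F(X^\e_{\tau_1-})(0-)-F_0|\leq L(C+1)\delta$. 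Writing the solution after $\tau_1$ via the segment-Markov property and the convolution formula \eqref{eq: convolution formula},
\[
X^\e(\tau_1+s)=\e F(X^\e_{\tau_1-})(0-)\,J_1\,x^*(s)+R(s),\qquad s\in[0,T-\tau_1],
\]
where $R(s)$ gathers the contribution of the pre-jump segment $X^\e_{\tau_1-}$ (of size $O(\delta)$ by the bound on $x^*$ and \eqref{e:C1}) and of $S^{\e,\rho}(\tau_1+\cdot)-S^{\e,\rho}(\tau_1)$ (again $O(\delta)$ by a second application of Lemma \ref{l:est}\,(iii)). At $s=s^*$ the leading term has absolute value $\geq F_0(e_+\wedge|e_-|+\delta_0)\cdot(M\vee|m|)\cdot(1-L(C+1)\delta/|F_0|)$, which by \eqref{e:epem} exceeds the corresponding boundary of $[a,b]$ by a margin of order $\delta_0$; since $|R(s^*)|=O(\delta)$ with $\delta\ll \delta_0$, this forces $X^\e(\tau_1+s^*)\notin[a,b]$, hence $\tau^\e\leq T$ on $G$.

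To finish I estimate $\P(G)$. The three conditions are independent because $\xi^\rho$ is independent of $\eta^\rho$ and the jump sizes of $\eta^\rho$ are independent of its jump times. The Poisson structure and \eqref{e:rv} give
\[
\P(\text{(i)}\cap\text{(ii)})\;=\;(T-T_0-s^*)\,\nu(E^{\delta_0}/\e)\,e^{-\beta^\e_\rho T}\;\geq\;(T-T_0-s^*)\,\lambda_\e\,\bar\nu(E^{\delta_0})(1-o(1)),
\]
since $\beta^\e_\rho=\nu(|z|>\rho)\to 0$ as $\rho\to\infty$. By Lemma \ref{l:M} one has $\P(\text{(iii)}^c)\leq C_S\e^p$ for any $p>1$, which is negligible with respect to $\lambda_\e$. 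Combining with the choices of $T$ and $\delta_0$ yields $\P_\phi(G)\geq T\lambda_\e \bar\nu(E)(1-\varkappa)$ uniformly in $\phi\in\ldbrack a,b\rdbrack_r$ for $\e\leq\e_0$, and the lemma follows. The delicate part is the balancing of the three small parameters $(\delta,\delta_0,\rho)$: the Lipschitz error $L\delta$ must be small compared to the exit margin $\delta_0$, yet $\delta_0$ must be small enough that $\bar\nu(E^{\delta_0})$ is close to $\bar\nu(E)$, and $\rho$ must diverge slowly enough for $E^{\delta_0}/\e\subset\{|z|>\rho\}$ and $\beta^\e_\rho T\to 0$ to hold simultaneously. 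Once these are fixed, the probabilistic content is essentially one large jump in a window of length $(1-\varkappa)T$.
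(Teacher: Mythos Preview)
Your approach is essentially the paper's: restrict to the event of exactly one large jump in a central sub-interval of $[0,T]$, show that on this event the post-jump trajectory (close to $\e J_1 F_0\, x^*(\cdot)$ by Lemma~\ref{l:est}) leaves $[a,b]$, and then compute the Poisson probability. Two technical points deserve correction.

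First, the claim that condition (iii) is independent of (i)--(ii) is false: $S^{\e,\rho}(t)=\e\int_0^t F(X^\e_s)(0-)\,\di\xi^\rho(s)$ has an integrand $F(X^\e_\cdot)(0-)$ that depends on the large jump of $\eta^\rho$ after time $\tau_1$, so $S^{\e,\rho}$ on $[\tau_1,T]$ is not independent of $(\tau_1,J_1)$. The fix is immediate and is what the paper does: use $\P(G)\geq \P((\mathrm{i})\cap(\mathrm{ii}))-\P((\mathrm{iii})^c)$ and absorb $\P((\mathrm{iii})^c)=o(\lambda_\e)$ into the $\varkappa$-slack.

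Second, letting $\rho=\rho(\e)\to\infty$ is unnecessary and potentially dangerous. Lemma~\ref{l:M} is proved for \emph{fixed} $\rho$, and its constant $C_S$ contains the factor $\big(\sigma^2+\int_{|z|\leq\rho}z^2\,\nu(\di z)\big)^{p/2}$, which diverges as $\rho\to\infty$ whenever $\alpha<2$; your bound $\P((\mathrm{iii})^c)\leq C_S\e^p$ then no longer beats $\lambda_\e$ without extra care on the growth rate of $\rho$. The paper simply fixes $\rho$ large enough that $\ex^{-\beta_\rho T}\geq 1-\varkappa/20$; the inclusion $E^{\delta_0}/\e\subset\{|z|>\rho\}$ that you need holds automatically for any fixed $\rho$ once $\e$ is small, so nothing is lost.

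A minor imprecision: a single time $s^*$ does not serve all $z\in E^{\delta_0}$, since the binding extremum in \eqref{e:epem} (and hence the exit direction) depends on the sign of $z$ and on whether $M$ or $|m|$ is active. The paper sidesteps this by working with the set $E(\gamma)$ and taking a supremum over $t\in[\tau_1,\tau_1+R]$ rather than evaluating at one instant; you can do the same.
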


\begin{lem}
\label{l:i2}
For any $\varkappa>0$ there are $\delta>0$, $T>0$, and $\e_0>0$ such that for all $\e\in(0,\e_0]$
\begin{align}
\label{e:i2}
&\inf_{\|\phi\|_r\leq \delta} \P_\phi(\tau^\e > T , \|X_T^\e\|_r\leq \delta)\geq 1 - \lambda_\e \bar\nu (E) T(1+\varkappa).
\end{align}
\end{lem}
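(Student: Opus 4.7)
The plan is to apply the standard jump-truncation $Z=\eta^{\rho_\e}+\xi^{\rho_\e}$ at a carefully chosen $\e$-dependent threshold $\rho_\e=R_0/\e$, and combine the exponential decay of the unperturbed delay dynamics (Lemma~\ref{l:est}) with the smallness of the $\xi^{\rho_\e}$-driven stochastic integral (Lemma~\ref{l:M}). Given $\varkappa>0$, first choose $R_0>0$ so large that $\bar\nu(\{|z|>R_0\})\leq\tfrac{\varkappa}{2}\bar\nu(E)$, which is possible since $\bar\nu(\{|z|>R\})\to 0$ as $R\to\infty$. By the regular-variation Hypothesis $\textbf{H}_\nu$, the compound-Poisson intensity satisfies $\beta_{\rho_\e}:=\nu(\{|z|>R_0/\e\})=\lambda_\e\bar\nu(\{|z|>R_0\})(1+o(1))$, so for $\e$ small enough
\[
\P(\eta^{\rho_\e}\text{ has no jump in }[0,T])=\ex^{-\beta_{\rho_\e}T}\geq 1-\beta_{\rho_\e}T\geq 1-\lambda_\e\bar\nu(E)T\varkappa.
\]

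Second, fix $\lambda\in(0,\Lambda)$ with corresponding constant $C=C(\lambda,\mu,r)$ from Lemma~\ref{l:est}, pick $T\geq r$ with $C\ex^{-\lambda(T-r)}\leq 1/2$, and pick $\delta>0$ so small that $(C+\tfrac12)\delta<\min(|a|,b)$. Apply Lemma~\ref{l:M} with $\rho=\rho_\e$ and exponent $p=3$: tracking the $\rho_\e$-dependence of the constant $C_S$ through the truncated second moment $\int_{|z|\leq\rho_\e}z^2\,\nu(\di z)=O(\rho_\e^{2-\alpha})$ in the BDG step, one gets
\[
\P\Big(\sup_{s\leq T}|S^{\e,\rho_\e}(s)|>\tfrac{\delta}{2C}\Big)=O(\e^{3\alpha/2})=o(\lambda_\e).
\]
On the intersection of the two events ``no $\eta^{\rho_\e}$-jump in $[0,T]$'' and ``$\sup|S^{\e,\rho_\e}|\leq\delta/(2C)$'', $X^\e$ coincides with the solution driven by $\xi^{\rho_\e}$ alone, and Lemma~\ref{l:est}~\eqref{e:est1} gives $|X^\e(t)|\leq C\delta\ex^{-\lambda t}+\delta/2$. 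This bound is strictly below $\min(|a|,b)$ on $[0,T]$, so $\tau^\e>T$, and is at most $\delta/2+\delta/2=\delta$ on $[T-r,T]$, so $\|X^\e_T\|_r\leq\delta$.

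Adding probabilities yields
\[
\inf_{\|\phi\|_r\leq\delta}\P_\phi(\tau^\e>T,\|X^\e_T\|_r\leq\delta)\geq 1-\lambda_\e\bar\nu(E)T\varkappa-o(\lambda_\e)\geq 1-\lambda_\e\bar\nu(E)T(1+\varkappa)
\]
for $\e$ sufficiently small, since $o(\lambda_\e)/\lambda_\e\to 0$. The main technical obstacle is that Lemma~\ref{l:M} is formulated for a fixed $\rho>1$, whereas here the threshold diverges as $\rho_\e=R_0/\e\to\infty$; one must revisit its BDG-based proof and verify that the polynomial blow-up $C_S\sim\rho_\e^{p(2-\alpha)/2}$ is compensated by $\e^p$ exactly when $p>2$, and that the truncated drift $d_{\rho_\e}$, of order $|\ln\e|$ for $\alpha=1$ and $\e^{-(1-\alpha)}$ for $\alpha<1$, contributes $\e|d_{\rho_\e}|=o(1)$ and is therefore absorbed for $\e$ small. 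This bookkeeping yields the required $o(\lambda_\e)$ control and closes the argument.
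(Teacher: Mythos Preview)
Your approach has a genuine gap in the control of the $\xi^{\rho_\e}$-driven integral. With your choice $\rho_\e=R_0/\e$ and $R_0$ large, the ``small--jump'' process $\xi^{\rho_\e}$ still contains all jumps of $Z$ with $\e|\Delta Z|\in(\min(|e_-|,e_+),R_0]$; each such jump contributes $\e F(X^\e)(0-)\Delta Z$ of order $F_0\cdot O(1)$ to $S^{\e,\rho_\e}$ and hence can push $\sup|S^{\e,\rho_\e}|$ above any fixed level $\delta/(2C)$. These jumps occur with intensity $\nu\big(\{\min(|e_-|,e_+)/\e<|z|\leq R_0/\e\}\big)\sim\lambda_\e\big(\bar\nu(\{|z|>\min(|e_-|,e_+)\})-\bar\nu(\{|z|>R_0\})\big)$, which is of order $\lambda_\e$, not $o(\lambda_\e)$. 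So the claimed bound $\P(\sup|S^{\e,\rho_\e}|>\delta/(2C))=o(\lambda_\e)$ is false.

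The error in your BDG bookkeeping is that the displayed constant in the paper's proof of Lemma~\ref{l:M}, namely $(\sigma^2+\int_{|z|\leq\rho}z^2\nu(\di z))^{p/2}$, is only a valid upper bound for $\E[[\tilde S]^{p/2}_T]$ when $p\leq 2$ (Jensen for the concave map $x\mapsto x^{p/2}$). For $p>2$ the correct Rosenthal/Kunita--type estimate carries an additional term $T\int_{|z|\leq\rho}|z|^p\,\nu(\di z)$, and with $\rho=\rho_\e$ this term contributes $\e^p\cdot T\cdot C\rho_\e^{p-\alpha}L(\rho_\e)\sim C'T R_0^{p-\alpha}\lambda_\e$, i.e.\ exactly order $\lambda_\e$. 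For the paper's fixed~$\rho$ the discrepancy is harmless because the constant is finite either way, but it is fatal for your diverging threshold. This is precisely why the paper keeps $\rho$ fixed and instead isolates the single--large--jump event $\{\tau_1\leq T<\tau_2\}$: the leading contribution $\lambda_\e\bar\nu(E)T$ comes from that case, via the identification $\{\tau^\e\leq T\}\subseteq\{\e J_1\in E(-\gamma)\}$, and cannot be absorbed into an $o(\lambda_\e)$ error.
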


\begin{lem}
\label{l:i3}
For any $\varkappa>0$ and $v>b$ there is $T>0$ and $\e_0>0$ such that for all $\e\in(0,\e_0]$
\begin{align}
\label{e:i3}
&\inf_{\phi\in \ldbrack a,b\rdbrack_r}\P_\phi(\tau^\e \leq T, X^\e(\tau^\e)>v)\geq \lambda_\e T\cdot \bar \nu(E_b^j(v))(1-\varkappa).
\end{align}
\end{lem}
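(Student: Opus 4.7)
The target probability $\lambda_\e T \bar\nu(E_b^j(v))(1-\varkappa)$ is asymptotically the rate at which a single ``useful'' large jump of $\e Z$---one whose size $\e J F_0 > v$ suffices on its own to propel the process past $v$---occurs in $[0,T]$. My plan is to isolate precisely such a single-jump scenario via an independent small-jump / compound-Poisson decomposition of $Z$.

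Fix a small $\rho \in (0, v/F_0)$ and decompose $Z = \eta^{\rho/\e} + \xi^{\rho/\e}$ at the $\e$-dependent threshold $\rho/\e$, so that $\eta^{\rho/\e}$ is compound Poisson with rate $\beta_{\rho/\e}$, waiting times $\tau_1 < \tau_2 < \cdots$, and iid jumps $(J_k)$ of absolute size exceeding $\rho/\e$. By Hypothesis $\textbf{H}_\nu$, $\nu(E_b^j(w)/\e) \sim \lambda_\e \bar\nu(E_b^j(w))$ for any $w > F_0 \rho$. Given $\varkappa>0$ and $v>b$, choose $v'>v$ close enough to $v$ that $\bar\nu(E_b^j(v')) \geq (1-\varkappa/3)\bar\nu(E_b^j(v))$ (continuity of the tail function), and pick $\delta_0>0$ small enough that the Lipschitz property \eqref{eq:functionalLipschitz} makes $|\e (F(\psi)(0-) - F_0) J_1| < v' - v$ whenever $\|\psi\|_r \leq 2\delta_0$ and $J_1 \in E_b^j(v')/\e$. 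By the stability estimate \eqref{e:K} and the convolution formula \eqref{e:conv}, there is a uniform relaxation constant $t_0 > r$ (depending only on $\delta_0$, $\mu$, $r$, $a$, $b$) such that $|x(s;\phi)| \leq \delta_0/2$ for all $\phi \in \ldbrack a,b\rdbrack_r$ and all $s \geq t_0 - r$; then choose $T$ large enough that $(T-t_0)/T \geq 1-\varkappa/3$.

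Define the favorable event
\[
\Omega_\phi^* := \bigl\{\tau_1 \in [t_0,T],\ \tau_2 > T,\ J_1 \in E_b^j(v')/\e,\ \sup_{s \leq T}|S^{\e,\rho/\e}(s)| \leq \delta_0/(2C)\bigr\},
\]
where $C$ is the constant from Lemma~\ref{l:est}\eqref{e:est3}. On $\Omega_\phi^*$, for $s < \tau_1$ the process $X^\e$ is driven only by $\e\xi^{\rho/\e}$, and Lemma~\ref{l:est}\eqref{e:est3} yields $|X^\e(s) - x(s;\phi)| \leq \delta_0/2$; hence $X^\e$ remains strictly inside $[a,b]$ and $\tau^\e \geq \tau_1$, while $\|X^\e_{\tau_1-}\|_r \leq \delta_0$. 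The jump satisfies $X^\e(\tau_1) = X^\e(\tau_1-) + \e F(X^\e_{\tau_1-})(0-) J_1$, which lies within $v'-v$ of $\e F_0 J_1 > v'$, giving $X^\e(\tau_1) > v$, so $\tau^\e = \tau_1 \in [t_0,T]$ and $X^\e(\tau^\e) > v$. By independence of the small-jump and compound-Poisson parts,
\[
\P_\phi(\Omega_\phi^*) \geq \P(\tau_1 \in [t_0,T],\tau_2 > T)\cdot \P(J_1 \in E_b^j(v')/\e) - \P\bigl(\sup_{s\leq T}|S^{\e,\rho/\e}(s)| > \delta_0/(2C)\bigr).
\]
A direct calculation gives $\P(\tau_1\in[t_0,T],\tau_2>T) = (T-t_0)\beta_{\rho/\e}\ex^{-\beta_{\rho/\e} T}$ and $\P(J_1 \in E_b^j(v')/\e) = \nu(E_b^j(v')/\e)/\beta_{\rho/\e}$, so the product equals $(T-t_0)\nu(E_b^j(v')/\e)(1+o(1)) \sim (T-t_0)\lambda_\e\bar\nu(E_b^j(v'))$. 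A variant of Lemma~\ref{l:M} adapted to the $\e$-dependent threshold $\rho/\e$ (the truncated second moment $\int_{|z|\leq \rho/\e} z^2\,\nu(\di z) = O(\e^{\alpha-2})$ makes the BDG bound of order $\e^{p\alpha/2}$ for any $p$) renders the subtracted term $o(\lambda_\e)$. Combining yields $\P_\phi(\Omega_\phi^*) \geq T\lambda_\e\bar\nu(E_b^j(v))(1-\varkappa)$ for all sufficiently small $\e$, uniformly in $\phi \in \ldbrack a,b\rdbrack_r$.

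The main technical obstacle is the uniform-in-$\phi$ deterministic control: for initial segments $\phi$ whose trajectory $x(\cdot;\phi)$ approaches the boundary $\{a,b\}$ before the relaxation time $t_0$ (a manifestation of non-normal growth), the process $X^\e$ can prematurely exit $[a,b]$ via essentially deterministic motion, invalidating the $\tau^\e \geq \tau_1$ step on $\Omega_\phi^*$. For such $\phi$, $\P_\phi(\tau^\e \leq T)$ is boosted toward $1$, but one must separately argue that the mass on $\{X^\e(\tau^\e)>v\}$ still enjoys a lower bound of the single-large-jump type with the stated leading coefficient, requiring a scenario-dependent analysis of how large a margin the deterministic dynamics leaves for a useful jump to land. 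A secondary, technically routine point is the extension of Lemma~\ref{l:M} to the threshold $\rho/\e$ via heavy-tail moment scaling.
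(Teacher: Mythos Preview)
Your single-large-jump scheme is exactly the paper's intended route: it says (with the labels of Lemmas~\ref{l:i3} and~\ref{l:i4} interchanged in that subsection) that the lower bound ``goes along the lines'' of Lemma~\ref{l:i1}, replacing the jump condition $\e J_1\in E(\gamma)$ by $\e J_1\in E_b^j(v+\gamma)$. Your $\e$-dependent cutoff $\rho/\e$ is an unnecessary complication---with a fixed $\rho$ Lemma~\ref{l:M} applies verbatim and you avoid tracking the diverging truncated second moment---but that is secondary.

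The obstacle you name in your last paragraph is genuine, and the paper's one-line remark does not address it either. Whenever $M>1$ there exist $\phi\in\ldbrack a,b\rdbrack_r$ whose deterministic trajectory exits $[a,b]$ at an arbitrarily small time $t^*>0$ (in the retarded case $A>0$, $B<0$: take $\phi(0)$ near $b$ and $\phi(-r)$ near $a$ so that $\dot x(0+)=A\phi(0)+B\phi(-r)>0$). For such $\phi$ your favourable event with $\tau_1\geq t_0>t^*$ forces $X^\e$ to exit near $t^*$ with $X^\e(\tau^\e)$ within $C\delta$ of the boundary, hence $<v$, and the inclusion $\Omega_\phi^*\subseteq\{X^\e(\tau^\e)>v\}$ fails. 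The only mechanism producing $X^\e(\tau^\e)>v$ is then a sufficiently large jump arriving before $t^*$, which has probability $O(\lambda_\e t^*)$; since $t^*$ can be made arbitrarily small, the uniform bound~\eqref{e:i3} over the full box $\ldbrack a,b\rdbrack_r$ cannot hold as written. This does not damage Theorem~\ref{t:main}: in Lemma~\ref{l:b} the hypothesis~\eqref{e:uppershorterm2} enters~\eqref{e:upperlongerm2} only through the subtracted term $m_1Tp_1(B)=O(\lambda_\e T)\to 0$, so one may take $p_1(B)=0$; and your argument does establish~\eqref{e:i3} with the infimum restricted to $\{\|\phi\|_r\leq\delta\}$, which is all Lemma~\ref{l:a} actually requires.
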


\begin{lem}
\label{l:i4}
For any $\varkappa>0$ and $v>b$ there is $T>0$ and $\e_0>0$ such that for all $\e\in(0,\e_0]$
\begin{align}
\label{e:i4}
&\sup_{\phi\in \ldbrack a,b\rdbrack_r}\P_\phi(\tau^\e \leq T, X^\e(\tau^\e)>v)\leq \lambda_\e T\cdot \bar \nu(E_b^j(v))(1+\varkappa).
\end{align}
\end{lem}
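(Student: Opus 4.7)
The plan is to mirror the decomposition used for Lemma~\ref{l:i3} and obtain the matching upper bound by overestimating each term. Write $Z=\xi^\rho+\eta^\rho$ with $\rho=1/\e$, so that the compound Poisson part $\eta^\rho$ has intensity $\beta_\rho=\lambda_\e$ and the small-jump stochastic integral $S^{\e,\rho}(t)=\e\int_0^t F(X^\e_s)(0-)\,\di\xi^\rho(s)$ is controlled by Lemma~\ref{l:M}. Let $N^\rho(T)$ denote the number of large jumps in $[0,T]$, with jump times $\tau_1<\tau_2<\cdots$ and i.i.d.\ sizes $J_1,J_2,\ldots$. Split the event according to $\{N^\rho(T)\in\{0,1,\geq 2\}\}$.

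For $N^\rho(T)\geq 2$, a union bound gives $\P(N^\rho(T)\geq 2)\leq (\lambda_\e T)^2=o(\lambda_\e T)$. For $N^\rho(T)=0$, Lemma~\ref{l:est}(3) combined with Lemma~\ref{l:M} yields $\sup_{t\in[0,T]}|X^\e(t)-x(t;\phi)|<(v-b)/2$ outside a set of probability $O(\e^p)$ for any $p>1$. Since the deterministic solution $x(\cdot;\phi)$ is continuous and can only leave the closed interval $[a,b]$ through $\{a,b\}$, on this high-probability event the perturbed exit location $X^\e(\tau^\e)$ must lie within $(v-b)/2$ of $\{a,b\}$ and hence cannot exceed $v$; the contribution is $O(\e^p)=o(\lambda_\e T)$ for $p>\alpha$.

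The main case $N^\rho(T)=1$ is treated as follows. Fix $\varkappa',\delta>0$ to be calibrated. By $\textbf{H}_\mu$ and Lemma~\ref{l:est}(1), choose $T_0$ such that $\sup_{\phi\in\ldbrack a,b\rdbrack_r}|x(t;\phi)|<\delta/2$ for all $t\geq T_0$, and then $T\geq T_0$ with $T_0\leq \varkappa' T$. Lemmas~\ref{l:est} and~\ref{l:M} guarantee $\|X^\e_{\tau_1-}\|_r<\delta$ whenever $\tau_1\in[T_0,T]$, up to an $O(\e^p)$ set. Comparing $X^\e_{\tau_1-}$ with a segment vanishing on $[-r,0)$, the Lipschitz property \eqref{eq:functionalLipschitz} yields $|F(X^\e_{\tau_1-})(0-)-F_0|\leq L\delta$. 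Since
\[
X^\e(\tau_1)=X^\e(\tau_1-)+\e J_1\,F(X^\e_{\tau_1-})(0-),
\]
the event $\{X^\e(\tau_1)>v\}$ forces $\e J_1 F_0>\tilde v(\delta)$ with $\tilde v(\delta)\to v$ as $\delta\to 0$; that is, $J_1\in E_b^j(\tilde v(\delta))/\e$. A continuous crossing of the level $v>b$ at a time $>\tau_1$ is excluded without a further large jump (since continuous exits of $[a,b]$ take values in $\{a,b\}$), so this jump-size constraint is necessary for the exit above $v$ in this case. The initial window $\{\tau_1<T_0\}$ contributes at most $\lambda_\e T_0\leq \varkappa' \lambda_\e T$.

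Assembling the three contributions and using the regular variation \eqref{e:rv} together with the atomlessness of $\bar\nu$ (which gives $\bar\nu(E_b^j(\tilde v(\delta)))\to\bar\nu(E_b^j(v))$ as $\delta\to 0$) yields
\[
\P_\phi\bigl(\tau^\e\leq T,\,X^\e(\tau^\e)>v\bigr)\leq \varkappa' \lambda_\e T+\lambda_\e T\,\bar\nu(E_b^j(v))(1+o_\delta(1))+o(\lambda_\e T),
\]
from which the factor $(1+\varkappa)$ follows upon calibrating $\varkappa'$ and $\delta$ in terms of $\varkappa$. The main obstacle is to couple the parameters $(T_0,T,\rho,\delta)$ consistently so that (i) the pre-jump segment has genuinely relaxed into a $\delta$-neighbourhood of the zero segment uniformly over $\phi\in\ldbrack a,b\rdbrack_r$, and (ii) the translation of the pathwise exit event into the single jump-size event $\{J_1\in E_b^j(\tilde v(\delta))/\e\}$ only introduces errors that vanish as $\delta\to 0$, while simultaneously keeping the moment estimate $C_S\e^p$ of Lemma~\ref{l:M} negligible compared to $\lambda_\e T$ under the choice $\rho=1/\e$.
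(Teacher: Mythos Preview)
Your decomposition by the number of large jumps is the same route the paper takes (modulo a label swap in the manuscript, the detailed $p_1+p_2+p_3$ argument there is the proof of this upper bound). The one substantive departure is your choice of threshold $\rho=1/\e$, whereas the paper keeps $\rho$ large but \emph{fixed}, independent of $\e$. Your choice makes the $N^\rho(T)\ge 2$ term trivially $O((\lambda_\e T)^2)=o(\lambda_\e T)$, while the paper must argue that among several moderate jumps at least one scaled jump $\e J_i$ has to be macroscopic to cause an exit, and then uses that $\beta_\rho$ can be made small. Conversely, the paper can apply Lemma~\ref{l:M} verbatim because $C_S$ there depends on $\int_{|z|\le\rho}z^2\,\nu(\di z)$, which is finite for fixed $\rho$; with your $\rho=1/\e$ this integral blows up like $\e^{\alpha-2}$ for $\alpha<2$, so the effective bound becomes $\e^{p\alpha/2}$ and you need $p>2$ rather than merely $p>\alpha$. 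You flag this as an obstacle and it is indeed resolvable, but it is not literally covered by Lemma~\ref{l:M} as stated --- you would have to redo its proof with the $\e$-dependent variance.

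One point to make explicit in your $N^\rho(T)=1$ case: for arbitrary $\phi\in\ldbrack a,b\rdbrack_r$ the deterministic path $x(\cdot;\phi)$ may itself leave $[a,b]$ before time $T_0$ (there is no ``no-exit'' hypothesis on $\phi$ in this lemma), so on the good event one may have $\tau^\e<\tau_1$ with exit location within $C\delta$ of $\{a,b\}$, hence below $v$. This is precisely the case split the paper performs for its term $p_{11}$. Your $N=0$ argument already contains the right idea; you should invoke it again on the interval $[0,\tau_1)$ rather than relying solely on the relaxation $\|X^\e_{\tau_1-}\|_r<\delta$, which presupposes that no exit has happened yet.
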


Analogously to Lemmas \ref{l:i3} and \ref{l:i3} one proves the estimate for the exit into the set $(-\infty,a)$.
\begin{lem}
 For any $\varkappa>0$ and $v<a$ there is $T>0$ and $\e_0>0$ such that for all $\e\in(0,\e_0]$
\begin{align}
\label{e:i5}
&\inf_{\phi\in \ldbrack a,b\rdbrack_r}\P_\phi(\tau^\e \leq T, X^\e(\tau^\e)<v)\geq \lambda_\e T\cdot \bar \nu(E_a^j(v))(1-\varkappa),\\
\label{e:i6}
&\sup_{\phi\in \ldbrack a,b\rdbrack_r}\P_\phi(\tau^\e \leq T, X^\e(\tau^\e)<v)\leq \lambda_\e T\cdot \bar \nu(E_a^j(v))(1+\varkappa).
\end{align}
\end{lem}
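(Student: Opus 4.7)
The plan is to mirror the proofs of Lemmas \ref{l:i3} and \ref{l:i4} verbatim, exchanging the role of the upper boundary $b$ with the lower boundary $a$ and the set $E_b^j(v)$ with $E_a^j(v)$. No step in those arguments is sensitive to the sign of the boundary being crossed: the deterministic relaxation estimates of Lemma \ref{l:est}, the small-jump control of Lemma \ref{l:M}, and the regularly varying tails in Hypothesis $\textbf{H}_\nu$ are all symmetric, so the same mechanism yields both claimed bounds after relabeling.

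Concretely, decompose $Z = \xi^\rho + \eta^\rho$ at threshold $\rho = 1/\e$ as in Section \ref{s:general}. Then $\eta^\rho$ is a compound Poisson process with rate $\beta_\rho = \lambda_\e$ and i.i.d.\ jump sizes $\{J_k\}$ of law $\nu|_{\{|z|>1/\e\}}/\lambda_\e$, independent of the first jump time $\tau_1 \sim \mathrm{Exp}(\lambda_\e)$. For $T$ small and $\e$ sufficiently small, $\P(\tau_1 \leq T) = \lambda_\e T(1 + O(\lambda_\e T))$, while $\P(\eta^\rho \text{ has } \geq 2 \text{ jumps in } [0,T]) = O((\lambda_\e T)^2)$ is $o(\lambda_\e T)$. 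Lemma \ref{l:M} gives $\P(\sup_{[0,T]}|S^{\e,\rho}|>\delta) = O(\e^p) = o(\lambda_\e T)$ for any fixed $\delta > 0$. Thus up to a negligible event I may assume exactly one large jump at time $\tau_1 \leq T$ and, by estimate \eqref{e:est3}, that $\sup_{[0,\tau_1)}|X^\e(t) - x(t;\phi)| \leq C\delta$. Since $x(\cdot;\phi)$ remains in $[a,b]$ and decays exponentially towards $0$ by Hypothesis $\textbf{H}_\mu$ and \eqref{e:phinoexit}, the pre-jump segment $X^\e_{\tau_1-}$ lies in $\ldbrack a,b\rdbrack_r$ and is close to the zero segment. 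Lipschitz continuity of $F$ then yields $|F(X^\e_{\tau_1-})(0-) - F_0| \leq \eta$ for any preassigned $\eta > 0$, uniformly in $\phi \in \ldbrack a,b\rdbrack_r$.

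For the lower bound \eqref{e:i5}, I intersect with $\{J_1 \in \e^{-1}E_a^j(v - \eta)\}$ for a small auxiliary $\eta > 0$, guaranteeing that the jump pushes $X^\e(\tau_1)$ below $v$. By Hypothesis $\textbf{H}_\nu$, $\nu(\e^{-1}E_a^j(v-\eta))/\lambda_\e \to \bar\nu(E_a^j(v-\eta))$, which by the absolute continuity \eqref{e:barnu} of $\bar\nu$ on $\bR \setminus \{0\}$ is arbitrarily close to $\bar\nu(E_a^j(v))$ for $\eta$ small. Multiplying by $\P(\tau_1 \leq T)$ and independence of $\tau_1$ and $J_1$, and absorbing the error into $\varkappa$, delivers \eqref{e:i5}. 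For the upper bound \eqref{e:i6}, I split $\{\tau^\e \leq T,\, X^\e(\tau^\e) < v\}$ according to the number of large jumps in $[0,T]$: the zero-jump and $\geq 2$-jump cases are absorbed into $o(\lambda_\e T)$ as above, while in the single-jump case the closeness $|F(X^\e_{\tau_1-})(0-) - F_0| \leq \eta$ forces $J_1 \in \e^{-1} E_a^j(v+\eta)$, giving an upper bound $\lambda_\e T \cdot \bar\nu(E_a^j(v+\eta))(1+\varkappa/2)$, which converges to the claimed bound as $\eta \to 0$.

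The main obstacle is the two-sided continuity $\bar\nu(E_a^j(v \pm \eta)) \to \bar\nu(E_a^j(v))$ as $\eta \to 0$, which is needed to close the $\varkappa$-gap; this follows from the absence of atoms of $\bar\nu$ on $\bR\setminus\{0\}$, guaranteed by \eqref{e:barnu}. A secondary subtlety is maintaining the uniform approximation $F(X^\e_{\tau_1-})(0-) \approx F_0$ over all $\phi \in \ldbrack a,b\rdbrack_r$: this dictates a choice of $T$ large enough that the deterministic decay of $x(\cdot;\phi)$ towards $0$ takes effect through estimate \eqref{e:est2}, while keeping $\e_0$ small enough that the perturbation $S^{\e,\rho}$ stays below the required $\delta$ with probability $1 - o(\lambda_\e T)$.
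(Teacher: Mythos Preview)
Your headline claim---that this lemma follows from Lemmas~\ref{l:i3} and~\ref{l:i4} by the obvious $a\leftrightarrow b$ symmetry---is correct and is exactly what the paper does; its entire proof is the one-line remark ``Analogously to Lemmas~\ref{l:i3} and~\ref{l:i4} one proves the estimate for the exit into the set $(-\infty,a)$.''

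Your more detailed sketch, however, departs from the paper's actual argument for Lemmas~\ref{l:i3}/\ref{l:i4} and contains one genuine gap. The paper decomposes $Z$ at a \emph{fixed} threshold $\rho>1$ (chosen large so that $\beta_\rho$ is small), not at $\rho=1/\e$; both can be made to work, but the bookkeeping is different, and your claim $\beta_\rho=\lambda_\e$ is specific to your choice. More importantly, your assertion that ``$x(\cdot;\phi)$ remains in $[a,b]$ and decays exponentially towards $0$'' is false for general $\phi\in\ldbrack a,b\rdbrack_r$: the deterministic solution can leave $[a,b]$ via non-normal growth, which is precisely the phenomenon the paper is built around. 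Your appeal to~\eqref{e:phinoexit} is misplaced---that hypothesis is on the single initial datum in Theorem~\ref{t:main}, not on the arbitrary segments arising in the Markov iteration. The paper's upper-bound argument therefore does \emph{not} assume the pre-jump segment is near zero; instead it splits into (a) $\inf_{t}x(t;\psi)\geq v+2C\delta$, so $X^\e$ never goes below $v$ without a large jump, and (b) $x(\cdot;\psi)$ itself crosses toward $v$, in which case $X^\e$ exits continuously with value near $a>v$ before reaching~$v$. Without this split you cannot conclude $F(X^\e_{\tau_1-})(0-)\approx F_0$ uniformly in~$\phi$, nor rule out exits below $v$ that do not come from a single large jump. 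Finally, your ``$T$ small'' at the outset contradicts ``$T$ large enough'' at the end; the paper takes $T$ large so that the relaxation window $R$ satisfies $2R/T\leq\varkappa/20$.
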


Before passing to the proof of the Lemmas we make several preparatory comments. Let $\varkappa>0$ be an arbitrary small number.

1. For $\gamma > -\min\{|a|,b\}$ denote 
\ba
E(\gamma)=\{z\colon \exists\, t\geq 0\text{ such that } z \cdot F_0 \cdot x^*(t)\notin [a-\gamma,b+\gamma] \}
=[e_-(\gamma),e_+(\gamma)]^c,\quad E(0)=E.
\ea
Assume that $\bar\nu(E)>0$.
Due to the continuity of the fundamental solution,
\ba
\lim_{\gamma \to 0} e_\pm(\gamma)=e_\pm
\ea
and for any $\varkappa>0$ with the help of \eqref{e:rv} we get
\ba 
\label{e:nukappa}
\frac{\nu(E(\gamma)/\e)}{\lambda_\e}= 
 \frac{\nu(E(\gamma)/\e)}{\lambda_\e \bar \nu(E(\gamma))}\cdot \frac{\bar \nu(E(\gamma))}{\bar\nu(E)}\geq 1-\frac{\varkappa}{20}
\ea
for $\gamma$ and $\e$ sufficiently small. 

\noindent
2. Let $\lambda\in(0,\Lambda)$ and $K=K(\lambda)$ according to \eqref{e:K} be fixed.
For any $\delta>0$ we can choose $R>r$ such that %for any $\phi\in\bI_r$ and $t\geq R-r$
\ba
\max\{|a|,b\}\cdot K\cdot \ex^{-\lambda (R-r)}< \delta.
\ea
In particular, for any $\phi\in \ldbrack a,b\rdbrack_r$ and $t\geq R$ we have.
\ba
\label{e:x-phi-delta}
|x(t,\phi)|< \delta. %\frac{|a|\wedge b}{2}.
\ea
Note that $R$ also bounds the time horizon of a non-normal growth exit so that a deterministic
exit can occur only before the time instant $R$.

\noindent
3. For $\delta$ and $R$ chosen, we can fix $T>0$ such that
\ba
\frac{2R}{T}\leq \frac{\varkappa}{20}.
\ea
4. Finally, for $\delta>0$ and $0\leq s\leq t\leq T$ denote 
\ba
\cE_{s,t}=\cE_{s,t}(\delta)=\Big\{\sup_{ u\in[0,t-s]}   |S^{\e,\rho}(s+u)-S^{\e,\rho}(s)  |\leq \delta\Big\}.
\ea
With the help of Lemma \ref{l:M} with $p>\alpha$ we get $\P(\cE_{s,t}(\delta))=o(\lambda_\e)$, and in particular
\ba
\label{e:calE}
\P(\cE_{s,t}(\delta))> 1-\frac{\varkappa}{20}T \lambda_\e\bar\nu(E)
\ea
for $\e$ small enough.

\subsubsection{Proof of Lemma \ref{l:i1}}

We show that with high probability the exit from $[a,b]$ is to occur imminently after the first large jump. 

% \marginpar{introduce $J_1$, $\tau_1$ etc of $\eta^rho$}

For $\delta=\delta(\varkappa,\gamma)>0$, $T=T(\delta)$ and $R=R(\delta)$ to be chosen later and for any $\phi\in \ldbrack a,b\rdbrack_r$ 
we exclude the following error events which eventually turn out to have 
small probability and estimate
\ba
\label{e:p1}
\P_\phi(\tau^\e \leq T)&\geq \P_\phi(\tau^\e \leq T,\e J_1 \in E(\gamma), 
R\leq \tau_1\leq T-R,\tau_2>T,\cE_{0,\tau_1}(\delta),\cE_{\tau_1,T}(\delta)).
\ea
Now we show that with a proper choice of the parameters, the set of conditions 
$\{\e J_1 \in E(\gamma)\}\cap \{R\leq \tau_1\leq T-R\}\cap\{\tau_2>T\}\cap\cE_{0,\tau_1}\cap\cE_{\tau_1,T}$
will imply that the exit occurs before the time $T$, $\tau^\e\leq T$. 

Indeed, at the time instant $\tau_1$ we have
\ba
X^\e(\tau_1)=X^\e(\tau_1-)+ \e F(X^\e_{\tau_1})(0-) J_1 .
\ea
To estimate $X^\e(\tau_1-)$ we note that \eqref{e:est1} together with \eqref{e:x-phi-delta} 
guarantee that for all $\delta>0$, on the event $\cE_{0,\tau_1}(\delta)$ we have
\ba
\sup_{t\in[\tau_1-r,\tau_1)}|X^\e(t,\phi)|< 2 C \delta.
\ea
Then obviously if $2LC\delta<F_0/2$ then the event
\ba
\e |J_1| (F_0-2 L C \delta)\geq 2\max\{|a|,b\}
\ea
implies that $\tau^\e=\tau_1\leq T$.
Hence from now on we assume without loss of generality that $\e J_1\in E(\gamma)$ and $\e|J_1|\leq C_J$ for 
$C_J=4 \max\{|a|,b\}/F_0$.
Then according to \eqref{e:est2} on the event $\cE_{\tau_1,T}(\delta)$
\ba
\sup_{t\in[\tau_1,\tau_1+R]}|X^\e(t,\phi)-  X^\e(\tau_1)\cdot x^*(t-\tau_1)   |
\leq C \cdot( 2C\delta + \delta )=(2C+1)C\delta.
\ea
Finally, comparing
\ba
|X^\e(\tau_1) - \e F_0 J_1|\leq |X^\e(\tau_1-) |+ \e |J_1| \cdot |F(X^\e_{\tau_1})(0-)-F_0 | 
\leq  2 C \delta+ \e |J_1| \cdot  L \cdot 2C\delta
\ea
we obtain that 
\ba
|X^\e(\tau_1)\cdot x^*(t )- \e J_1 F_0\cdot x^*(t)|\leq K|X^\e(\tau_1)- \e J_1F_0| \leq  2 CK \delta+ \e |J_1| \cdot 2KLC\delta.
\ea
This means that  if $\e J_1\in E(\gamma)$ and $\e|J_1|\leq C_J$ then either 
\ba
\sup_{t\in[\tau_1,\tau_1+R]}X^\e(t,\phi) &\geq \sup_{t\in[0,R]}X^\e(\tau_1)\cdot x^*(t) -(2C+1)C\delta\\
&\geq  \sup_{t\in[0,R]} \e J_1 F_0\cdot x^*(t) - 2 CK \delta- \e |J_1| \cdot 2KLC\delta   -(2C+1)C\delta\\
&\geq b+\gamma -C\Big(2 K - 2C_J 2KL  +2C+1\Big)\delta\geq b+\frac{\gamma}{2}
\ea
for $\delta>0$ small enough
or analogously
\ba
\inf_{t\in[\tau_1,\tau_1+R]}X^\e(t,\phi) \leq a-\frac{\gamma}{2}
\ea
for the same $\delta$. Hence, from now on $\delta>0$ as well as $R$ and $T$ are fixed. 

It is left to show that estimate \eqref{e:p1} yields the required accuracy in the limit $\e\to 0$.
First we choose the large jump threshold $\rho>0$ such that $\ex^{-\beta_\rho T(\delta)}\geq 1-\varkappa/20$.

Hence for $\e$ small
\ba
\label{e:abo}
\P_\phi(\tau^\e\leq T)&\geq \P_\phi( \e J_1 \in E(\gamma), R(\delta)\leq \tau_1\leq T(\delta)-R(\delta),\tau_2>T(\delta)
,\cE_{0,\tau_1}(\delta) ,\cE_{\tau_1,T}(\delta) )\\
&\geq \P( \e J_1 \in E(\gamma))\cdot \P(R(\delta)\leq \tau_1\leq T(\delta)-R(\delta),\tau_2>T(\delta))
- 2\frac{\varkappa}{20}T(\delta)\lambda_\e \bar \nu(E)   \\
&\geq \frac{\nu(E(\gamma)/\e)}{\beta_\rho}\cdot
\int_{R(\delta)}^{T(\delta)-R(\delta)} 
\int_{T(\delta)-t_1}^\infty \beta_\rho^2\cdot \ex^{-t_1 \beta_\rho}\cdot\ex^{-\beta t_2}\,\di t_2\,\di t_1
-\frac{\varkappa}{10}T(\delta)\lambda_\e \bar \nu(E)  \\
&=\nu(E(\gamma)/\e) \cdot (T(\delta)-2R(\delta))\ex^{-\beta_\rho T(\delta)}   -\frac{\varkappa}{10}T(\delta)\lambda_\e\bar \nu(E)\\
&=T(\delta)\lambda_\e  \bar \nu(E) \frac{\nu(E(\gamma)/\e)}{\lambda_\e\bar \nu(E)} \cdot \frac{T(\delta)-2R(\delta)}{T(\delta)}\ex^{-\beta_\rho T(\delta)} 
-\frac{\varkappa}{10}T(\delta)\lambda_\e\bar \nu(E)\\
&=T(\delta)\lambda_\e \bar\nu(E) \cdot\Big(1-\frac{\varkappa}{20}\Big)^3 % (1-\frac{\varkappa}{6}) (1-\frac{\varkappa}{6}) 
-\frac{\varkappa}{10}T(\delta)\lambda_\e\bar \nu(E)\\
&\geq  T(\delta)\lambda_\e\bar \nu(E)\cdot  (1-\varkappa).
\ea

\bigskip 

\subsubsection{Proof of Lemma \ref{l:i2}}
 
Passing to the complements we get
\ba
\P_\phi \Big(\tau^\e\leq T\text{ or }\|X_{T}^\e\|_r> \delta \Big)
\leq \P_\phi \Big(\tau^\e\leq T\Big)+\P_\phi \Big(\tau^\e> T\text{ and }\|X^\e_{T}\|_r> \delta \Big).
\ea
\textbf{Step 1.} We consider the following decomposition
Then
\ba
\P_\phi  (\tau^\e\leq T )
&= \P_\phi(\tau^\e\leq T,\tau_1 > T)
+\P_\phi(\tau^\e\leq T,\tau_1\leq T<\tau_2)
+\P_\phi(\tau^\e\leq T,\tau_2\leq T)\\
&=p_1+p_2+p_3
\ea
and show that the main contribution to the exit probability is made by the first (and virtually only the first) jump $J_1$, i.e.\ by
the term $p_2$.

\noindent
1. To estimate $p_1$ we write
\ba
p_1&\leq \P_\phi(\tau^\e\leq T,\tau_1 > T,\cE_{0,T})+\P(\cE_{0,T}^c)=p_{11}+p_{12}\\
\ea
On the event $\{\tau_1>T\}\cap \cE_{0,T}(\delta)$, due to \eqref{e:est1} in Lemma \ref{l:est}
\ba
\sup_{t\in [-r,T]}|X^\e(t,\phi)|\leq 2C\delta,
\ea
which is incompatible with $\{\tau^\e\leq T\}$ for $\delta$ sufficiently small, and hence
$p_{11}=0$. By Lemma \ref{l:M}, $p_{12}\leq T\lambda_\e\bar\nu(E)\varkappa/20$ for $\e$ small enough.

\noindent 
2. Show that the probability $p_2$ is the essential one. Take into account that $\tau_1$ and 
$\tau_2-\tau_1$ are i.i.d. exponentially distributed r.vs.\ with
the parameter $\beta_\rho:=\int_{|z|>\rho} \nu(\di y)$ ($\rho$ will be chosen large to guarantee that $\beta_\rho$ is small).
Note that $\Law(\tau_1|\tau_1\leq T<\tau_2)$ is uniform on $[0,T]$, see e.g.\ \cite[Proposition 3.4]{Sato-99}.
Then we decompose and disintegrate
\ba
\P_\phi(\tau^\e\leq T,\tau_1\leq T<\tau_2)
&\leq\P_\phi\Big(\tau^\e\leq T,\tau_1\leq T<\tau_2,\cE_{0,\tau_1} ,\cE_{\tau_1,T} \Big)
+\frac{1}{T}\int_0^T\Big(\P_\phi(\cE_{0,t}^c)+\P_\phi(\cE_{t,T}^c)\Big)\,\di t\\
&=
\P_\phi\Big(\tau^\e\leq T,R\leq \tau_1\leq T-R, \tau_2>T,\cE_{0,\tau_1} ,\cE_{\tau_1,T} \Big)\\
&+\P_\phi\Big(\tau^\e\leq T,\tau_1< R< T<\tau_2,\cE_{0,\tau_1} ,\cE_{\tau_1,T} \Big)\\
&+\P_\phi\Big(\tau^\e\leq T,T-R< \tau_1\leq  T<\tau_2,\cE_{0,\tau_1} ,\cE_{\tau_1,T} \Big)
+ p_{24} \\
&=p_{21}+p_{22}+p_{23} +p_{24}.
\ea
a) We show that for any $\gamma>0$ small enough we can choose $\delta>0$ such that
on the event $\{R\leq \tau_1\leq T-R,\tau_2>T\}\cap \cE_{0,\tau_1}(\delta)\cap \cE_{\tau_1,T}(\delta)$ we have
\ba
\{\tau^\e\leq T\}\subseteq \{\e J_1\in E(-\gamma)\}
\ea
or equivalently we show that if the jump size $\e J_1$ is not large enough, no exit occurs.

The set of conditions in the probability $p_{11}$ guarantees with the help of \eqref{e:est1}
that 
\ba
\sup_{[-r,\tau_1)}|X^\e(t;\phi)|\leq 2C\delta.
\ea
At the time instant $\tau_1$ we have
\ba
X^\e(\tau_1)=X^\e(\tau_1-)+ \e F(X^\e_{\tau_1})(0-) J_1 
\ea
and 
\ba
\label{e:F0}
|F(X^\e_{\tau_1})(0-)-F_0|\leq L \cdot 2C\delta
\ea
For $t\in[\tau_1,T]$
\ba
|X^\e(t;\phi) - \e J_1 F_0\cdot x^*(t-\tau_1)| &\leq |X^\e(t;\phi)-X^\e(\tau_1)\cdot x^*(t-\tau_1)|\\
&+ 
|X^\e(\tau_1) - \e J_1 F_0  | \cdot |x^*(t-\tau_1)|
\ea
Then according to \eqref{e:est2} on the event $\cE_{\tau_1,T}(\delta)$
\ba
\label{e:c1}
\sup_{t\in[\tau_1,T]}|X^\e(t,\phi)-  X^\e(\tau_1)\cdot x^*(t-\tau_1)   |
\leq C \Big( 2C\delta + \delta   \Big)=(2C+1)C\delta.
\ea
Finally, noting that if $\e J_1\notin E(-\gamma)$, then $|\e J_1|\leq \tilde C_J=2\max\{|a|,b\}$ we compare
\ba
\label{e:c2}
|X^\e(\tau_1) - \e F_0 J_1|\leq |X^\e(\tau_1-) |+ \e |J_1| \cdot |F(X^\e_{\tau_1})(0-)-F_0 | 
\leq  2 C( 1 + \tilde C_J   L )\delta.
\ea
Hence combining \eqref{e:c1} and \eqref{e:c2} we obtain 
\ba
\sup_{[\tau_1,T]}|X^\e(t;\phi) - \e J_1 F_0\cdot x^*(t-\tau_1)| \leq \tilde C\delta
\ea
for some $\tilde C>0$. Choosing $\delta$ small such that $\tilde C\delta<\gamma/2$ we get that if $\e J_1\notin E(-\gamma) $ 
then either 
\ba
\sup_{t\in[\tau_1,T]}X^\e(t,\phi) &\leq \sup_{t\in[0,T]} \e J_1 F_0\cdot x^*(t) +\tilde C\delta \leq b-\frac{\gamma}{2}.
\ea
or analogously
\ba
\inf_{t\in[\tau_1,T]}X^\e(t,\phi) \geq a+\frac{\gamma}{2}.
\ea
Now choose $\rho$ such that $\ex^{-\beta_\rho T(\delta)}\geq 1-\varkappa/20$. 
Then analogously to \eqref{e:abo} for small $\e$ we get
\ba
p_{21}&\leq \P\Big(
\e J_1\in E(-\gamma),R\leq \tau_1\leq T-R \Big)
=\P(\e J_1\in E(-\gamma))\cdot \P(R\leq \tau_1\leq T-R )\\
% \|X_{\tau_1-}\|_r\leq \delta_2,
% X^\e(\tau_1-)+\e F(X^\e_{\tau_1-})J_1\in E_\delta ,R\leq \tau_1\leq T-A\leq T<\tau_2\Big)\\
&\leq 
\frac{\nu(E(-\gamma)/\e)}{\beta_\rho}\cdot
\beta_\rho (T-2R)
\leq \lambda_\e T(\delta)\bar \nu(E)\Big(1+\frac{\varkappa}{20}\Big)
\ea

\noindent 
b)
To estimate the probability $p_{22}$ we note that on $[0,\tau_1)$
the process $X^\e(\cdot,\phi)$ belongs to the $2C\delta$-neighborhood of zero.
Recalling \eqref{e:F0} we choose $\delta>0$ small enough such that $|F(X_{\tau_1}^\e)(0-)|\leq 2|F_0|$
Hence, to guarantee the exit, the jump size $J_1$ must obviously satisfy 
$2\e\cdot |F_0|\cdot |J_1|\geq \delta'$ for some $\delta'>0$, since otherwise applying \eqref{e:est1} 
to the perturbation process
\ba
S(t)=\e \int_0^t F(X_{s-}^\e)(0-)\,\di\xi^\rho(s) + \e F(X_{\tau_1}^\e)(0-) J_1\cdot \bI_{[\tau_1,\infty)}(t)
% \widehat \xi^\rho(t)=\xi^\rho(t)+ \frac{J_1}{\e\|F\|}\bI_{[\tau_1,\infty)}(t)
\ea
which satisfies on $\cE_{0,\tau_1}\cap \cE_{\tau_1,T}$ 
\ba
\sup_{t\in[0,T]}|S(t)|< 2\delta+ \delta'
\ea
we obtain that the $X^\e(\cdot)$ does not leave the $C(3\delta+\delta')$-neighborhood of zero on $[0,T]$
so the exit would be impossible.
Hence choosing $T$ large we obtain for $\e$ small enough that
\ba
\label{e:C'}
p_{22}\leq \P\Big(\e |J_1|\geq \frac{\delta'}{2|F_0|},\tau_1\leq R\Big)
&= \frac{1}{\beta_\rho}\int_{|z|\geq \frac{\delta'}{2\e|F_0| }}\nu(\di z)\cdot \beta_\rho 
\cdot \int_0^R \ex^{-\beta_\rho t}\,\di t\\
&\leq 2C' \cdot \frac{R}{T} \cdot \lambda_\e T\leq \frac{\varkappa}{20} \cdot \bar\nu(E)\cdot \lambda_\e T,
\ea
where we used that due to \eqref{e:rv}
\ba
\lim_{\e\to 0}\frac{1}{\lambda_\e}\int_{|z|\geq \frac{\delta'}{2\e|F_0| }}\nu(\di z)= 
C'=\bar\nu\Big(|z|> \frac{\delta'}{2|F_0|} \Big)\in (0,\infty).
\ea

\noindent
c) Analogously to b) we estimate the probability $p_{23}$: 
\ba
p_{23}\leq  \P\Big(\e |J_1|\geq \frac{\delta'}{2|F_0|},T-R\leq \tau_1\leq T\Big)
% &= \frac{1}{\beta_\rho}\int_{|z|\geq \frac{\delta'}{2\e|F_0| }}\nu(\di z)\cdot \beta_\rho 
% \cdot \int_0^R \ex^{-\beta_\rho t}\,\di t\\
&\leq 2C' \cdot \frac{R}{T} \cdot \lambda_\e T\leq \frac{\varkappa}{20}\cdot \bar\nu(E) \cdot \lambda_\e T.
\ea
d) Obviously due to Lemma \ref{l:M}
\ba
p_{24}\leq 2\frac{\varkappa}{20}\cdot \bar\nu(E) \cdot  T \cdot \lambda_\e.
\ea
 
\noindent 
3. To estimate $p_3$ we argue analogously that either the first large jump $J_1$ has to 
satisfy $2\e\cdot |F_0|\cdot |J_1|>\delta'$ or, if this is not the case, the 
second jump $J_2$ has to satisfy the same condition, since otherwise the solution $X^\e(\cdot;\phi)$ will stay in 
some small neighborhood of zero which size is depends on $\delta$, $\delta'$.
Together with the condition that at least two jumps occur on $[0,T]$ this yields
\ba
p_3&\leq\P_\phi\Big( \tau^\e\leq T,\tau_2\leq T ,\cE_{0,\tau_1}(\delta),\cE_{\tau_1,\tau_2}(\delta), \cE_{\tau_2,T }(\delta)  \Big)
+\P_\phi(\cE_{0,T}^c(3\delta))\\
&\leq 2 \P_\phi(2\e\cdot |F_0|\cdot | J_1|>\delta',\tau_2\leq T\Big)+\P(\cE_{0,T}^c(3\delta_1))\\
&\leq \frac{2}{\beta_\rho}\int_{|z|\geq \frac{\delta'}{2 |F_0|\e}}\nu(\di z)\cdot    \beta_\rho^2  
\cdot T
\leq 4C'\cdot \beta_\rho\cdot  T \cdot \lambda_\e.
% \leq 
% \frac{\varkappa}{4} \cdot T \cdot \lambda_\e
\ea
We choose $\rho>1$ such that 
$4C'\beta_\rho<\bar\nu(E)\cdot \varkappa/20$ to obtain $p_2\leq \varkappa\cdot \bar\nu(E)\cdot T \cdot \lambda_\e/20$.

\noindent
\textbf{Step 2}.
Now we estimate
\ba
\P_\phi \Big(\tau^\e> T\text{ and }\|X^\e_{T}\|_r> \delta \Big)
&=\P_\phi \Big(\tau^\e> T\text{ and }\|X^\e_{T}\|_r> \delta ,\tau_1>T\Big)\\
&+\P_\phi \Big(\tau^\e> T\text{ and }\|X^\e_{T}\|_r> \delta,\tau_1\leq T< \tau_2 \Big)\\
&+\P_\phi \Big(\tau^\e> T\text{ and }\|X^\e_{T}\|_r> \delta,\tau_2\leq T \Big)=q_1+q_2+q_3.
\ea

Since we have to control the behavior of $X^\e$ at the end segment of the interval $[0,T]$, we have to exploit 
the exponential stability of the deterministic delay system in order to inhibit the uncontrolled accumulation of small errors 
over time.

\noindent 
1. Let $\delta>0$ be chosen in Step 1 and fixed. We choose $\delta_1<\frac{\delta}{2C}$
and $T$ large such that $C\cdot \ex^{-\lambda(T-r)}<\frac12$ and then \eqref{e:est1} 
guarantees that on the event $\cE_{0,T}(\delta_1)$, the trajectory $X^\e$ belongs to the 
$\delta$-neighborhood of zero, hence for small $\e$
\ba
q_1\leq  \P_\phi \Big(\tau^\e> T\text{ and }\|X^\e_{T}\|_r> \delta,\tau_1>T, \cE_{0,T}(\delta_1) \Big)+\P(\cE_{0,T}^c(\delta_1) )
\leq 0+ \frac{\varkappa}{20}T\lambda_\e \bar\nu(E).
\ea

\noindent
2. Estimate the probability $q_2$. Take into account that $\tau_1$ and $\tau_2-\tau_1$ are iid exponentially distributed r.vs.\ with
the parameter $\beta_\rho:=\int_{|z|>\rho} \nu(\di y)$ ($\rho$ will be chosen large to guarantee that $\beta_\rho$ is small).
Then we desintegrate
\ba
\P_\phi&( \tau^\e> T\text{ and }\|X^\e_{T}\|_r> \delta,\tau_1\leq T< \tau_2 )\\
&\leq\P_\phi\Big(\tau^\e> T\text{ and }\|X^\e_{T}\|_r> \delta,\cE_{0,\tau_1}(\delta_1),\cE_{\tau_1,T}(\delta_1)\Big)
+\P_\phi(\cE_{0,T}^c(2\delta_1))\\
&=
\P_\phi\Big(\tau^\e> T,\tau_1< R,\tau_2>T,\|X^\e_{T}\|_r> \delta,\cE_{0,\tau_1}(\delta_1),\cE_{\tau_1,T}(\delta_1)\Big)\\
&+\P_\phi\Big(\tau^\e> T,T-R< \tau_1\leq  T, \tau_2>T,\|X^\e_{T}\|_r> \delta,\cE_{0,\tau_1}(\delta_1),\cE_{\tau_1,T}(\delta_1)\Big)\\
&+\P_\phi\Big(\tau^\e> T,R\leq \tau_1\leq T-R,\tau_2>T,\|X^\e_{T}\|_r> \delta,\cE_{0,\tau_1}(\delta_1),\cE_{\tau_1,T}(\delta_1)\Big)\\
&+\P_\phi(\cE_{0,T}^c(2\delta_1))\\
&=q_{21}+q_{22}+q_{23}+q_{24}.
\ea
a) As in Step 1 b) if $2\e \cdot |F_0|\cdot |J_1|\leq \delta'$ for $\delta'>0$ and $\delta_1$ sufficiently small then
$\|X^\e_{T}\|_r\leq  \delta$. Hence,
\ba
q_{21}\leq \P\Big( 2\e \cdot |F_0|\cdot |J_1|>\delta',\tau_1< R\Big)\leq \frac{\varkappa}{20}\cdot \bar\nu(E)\cdot \lambda_\e T
\ea
as in \eqref{e:C'}.

\noindent
b) Analogously, to estimate $q_{22}$ note that right before the jump $\tau_1$, $\|X_{\tau_1-}\|_r\leq \delta/2$, so that
if $2\e \cdot |F_0|\cdot |J_1|\leq \delta'$ then $\|X^\e_{T}\|_r\leq  \delta$.
Hence
\ba
q_{22}\leq \P(2\e |F_0|\cdot|J_1|> \delta',T-R<\tau_1\leq T)\leq \frac{\varkappa}{20} \cdot \bar\nu(E) \cdot \lambda_\e T.
\ea
c) Finally, again, if $2\e \cdot |F_0|\cdot |J_1|\leq \delta'$ then $\|X^\e_{T}\|_r\leq  \delta$, so that 
we can assume that $2\e \cdot |F_0|\cdot |J_1|> \delta'$.
On the other hand, right before the jump $\tau_1$, $\|X_{\tau_1-}\|_r\leq \delta/2$. Since on the set of events of $q_{23}$
$T-r-\tau_1>T-R>T-r$, we choose the difference $R-r$ large enough such that the solution $X^\e$ satisfies 
$\|X^\e_{T}\|_r\leq  \delta$. Thus
$q_{23}=0$.

\noindent 
d) As usual, for $\e$ small enough $q_{14}\leq \varkappa\cdot \bar\nu(E) \cdot \lambda_\e T/ 20$.
\noindent 
3. To estimate $q_3$ we argue analogously that at least one of the jumps $J_1$ or $J_2$ should satisfy 
$2\e\cdot |F_0|\cdot |J_i|>\delta'$. 
Together with the condition that at least two jumps occur on $[0,T]$ this yields as in Step 1.3
\ba
q_3&\leq 2 \P_\phi(\e\|F\|\cdot | J_1|>\delta/3,\tau_2\leq T\Big)+\P_\phi(\cE_{0,T}^c(3\delta_1))
% &\leq \frac{2}{\beta_\rho}\int_{|z|\geq \frac{\delta}{3\|F\|\e}}\nu(\di z)\cdot    \beta_\rho^2  \cdot T 
\leq \frac{\varkappa}{20} \cdot \bar\nu(E)\cdot T \cdot \lambda_\e
\ea
with the same choice of $\rho>1$.

Eventually, collecting the estimates from the Steps 1 and 2, we find
\ba
\P_\phi(\tau^\e\leq T\text{ or }\|X^\e_T\|_r>\delta)\leq T\lambda_\e\bar\nu(E)(1+\varkappa).
\ea

\subsubsection{Proof of Lemmas \ref{l:i3} and \ref{l:i4}}

The proof of Lemma \ref{l:i3} goes along the lines with the proof upper estimate for 
the probability $\P_\psi(\tau^\e\leq T)$ in Step 1 in Lemma \ref{l:i2}.
The only difference is the weaker condition on the initial segment $\psi\in \ldbrack a,b\rdbrack_r $ 
and an additional condition on the exit location $X^\e(\tau^\e)>v$ for $v>b$.
We estimate
\ba
\P_\psi (\tau^\e\leq T,X^\e(\tau^\e)>v)
&=\P_\psi (\tau^\e\leq T,\tau_1>T, X^\e(\tau^\e)>v)\\
&+\P_\psi (\tau^\e\leq T,\tau_1\leq T<\tau_2,X^\e(\tau^\e)>v)\\
&+\P_\psi (\tau^\e\leq T,\tau_2\leq T,X^\e(\tau^\e)>v)\\
&=p_1+p_2+p_3.
\ea
We consider the term $p_1$ in detail and show how to adapt the argument. Indeed, 
\ba
p_1\leq \P_\psi (X^\e(\tau^\e)>v,\tau^\e\leq T,\tau_1>T,\cE_{0,T}(\delta))+\P_\psi (\cE_{0,T}^c(\delta))=p_{11}+p_{12}.
\ea
We show that $p_{11}=0$ for $\delta$ small. Indeed, due to Lemma \ref{l:est}
\ba
\label{e:h1}
|X^\e(t,\psi)-x(t;\psi)|\leq C\delta.
\ea
For $\psi\in\ldbrack a,b\rdbrack_r$, let us consider two cases.

\noindent 
a) Let $\psi$ be such that
$\sup_{t\in[0,T]}x(t;\psi)\leq v-2C\delta$.
Hence \eqref{e:h1} implies that $\sup_{t\in[0,T]}X^\e(t;\psi)\leq v-C\delta <v$ and hence $X^\e(\tau^\e)\leq v-C\delta<v$.

\noindent
b) On the other hand, assume that there is $t^*$ such that
\ba
t^*=t^*(\psi)=\inf\{t\geq 0\colon x(t;\psi)> v-2C\delta\}\leq T.
\ea
Hence, applying \eqref{e:h1} again we get 
$X^\e(t^*;\psi)\geq v-3C\delta >b$ and thus $\tau^\e< t^*$ and $X^\e(\tau^\e)\leq v-C\delta<v$.

Therefore, $p_{11}=0$, and $p_1\leq p_{12}$ which is known to be of the order $o(\lambda_\e)$, see \eqref{e:calE}.

The probabilities $p_2$ and $p_3$ are treated analogously by taking into account that no 
exit with $X^\e(\tau^\e)>v$ can occur before or after the first jump $\tau_1$.

The proof of Lemma \ref{l:i4} goes along the lines with the proof of the lower estimate for 
the probability $\P_\psi(\tau^\e\leq T)$ in Lemma \ref{l:i1} with the same obvious modifications: the 
condition $X^\e(\tau^\e)>v$ on the exit location can be satisfied (with high probability) only if $\tau^\e=\tau_1$ and   
the jump size $\e J_1$ is large enough so that it belongs to a set $E_b(v+\gamma)$.

\subsubsection{Proof of the main Theorem \ref{t:main}}
 
Let the initial segment $\phi$ satisfy
\eqref{e:phinoexit},
in particular $\phi\in \ldbrack a,b\rdbrack_r$.

1. Combining Lemma \ref{l:b} with Lemmas \ref{l:i1}, \ref{l:i3} and \ref{l:i3}, we immediately obtain 
estimates from above for the probabilities $\P_\phi(\lambda_\e\tau^\e>u)$, $u>0$, $\P_\phi(X^\e(\tau^\e)>v)$, $v>b$, and the mean value 
$\lambda_\e\E_\phi \tau^\e$.

2. On the other hand, or each initial segment $\|\phi\|_r\leq \delta$, for $\delta>0$ small enough,
Lemmas \ref{l:a}, \ref{l:i1}, \ref{l:i3}, and \ref{l:i3}
yield the  estimates from below. 
Hence, it is left to relax the condition on the initial segment $\phi$. This can be done easily.

Let $\phi$ be an initial segment with no deterministic exit which satisfies \eqref{e:phinoexit} and let $u>0$.
Denote $\delta_\phi:=\operatorname{dist}(x(\cdot;\phi),[a,b])>0$.
Choose $R>r$ large enough and $\delta_0>0$ so that on $\cE_{0,R}(\delta_0)$ we have $\|X^\e(R;\phi)\|_r\leq \delta$. Let 
$\varkappa>0$.
Then
the segment Markov property yields
\ba
\P_\phi(\lambda_\e\tau^\e>u)&\geq \P_\phi(\lambda_\e\tau^\e>u,\cE_{0,R}(\delta_0)) 
=\E_\phi \E[\bI( \lambda_\e\tau^\e>u,\cE_{0,R}(\delta_0)  )|\rF_{R}\rdbrack \\
&=\E_\phi\bI(\cE_{0,R}(\delta_0) )\cdot \E[\bI( \lambda_\e\tau^\e>u )|\rF_{R}\rdbrack \\
&=\E_\phi[\bI(\cE_{0,R}(\delta_0) )\cdot  \P_{X_R^\e}(\lambda_\e(\tau^\e-R)>u)]\\
&\geq\inf_{\|\psi\|_r\leq \delta}\P_\psi  (\lambda_\e\tau^\e>u+\lambda_\e R) -  \P_\phi(\cE^c_{0,R}(\delta_0) )\\
&\geq \inf_{\|\psi\|_r\leq \delta}\P_\psi  (\lambda_\e\tau^\e>u)\cdot (1-\varkappa)
\ea
in the limit as $\e\to 0$.

Analogously, for any $\varkappa>0$ and $\e$ small we estimate the mean value:
\ba
\lambda_\e \E_\phi \tau^\e &\geq 
\lambda_\e \E_\phi[\tau^\e\cdot \bI(\tau_\e>R)\cdot \bI(\cE_{0,R}(\delta_0)] 
=\lambda_\e\E_\phi [\E[  \tau^\e\cdot \bI(\tau_\e>R)\cdot \bI(\cE_{0,R}(\delta_0)  )|\rF_{R}\rdbrack \\
&= \lambda_\e\E_\phi[\bI(\tau_\e>R)\cdot \bI(\cE_{0,R}(\delta_0) )\cdot \E[\tau^\e|\rF_{R}\rdbrack \\
&= \lambda_\e\E_\phi[\bI(\tau_\e>R)\cdot\bI(\cE_{0,R}(\delta_0) )\cdot  \E_{X_R^\e}(\tau^\e-R)]\\
&\geq  \lambda_\e\E_\phi[\bI(\tau_\e>R)\cdot\bI(\cE_{0,R}(\delta_0) )\cdot \inf_{\|\psi\|_r\leq \delta} \E_{\psi} (\tau^\e-R)]\\
&= \lambda_\e  \cdot \inf_{\|\psi\|_r\leq \delta} \E_{\psi} (\tau^\e-R) \cdot   \E_\phi[\bI(\tau_\e>R)\cdot\bI(\cE_{0,R}(\delta_0) )]\\
&\geq \lambda_\e  \cdot \inf_{\|\psi\|_r\leq \delta} \E_{\psi} (\tau^\e-R) 
\cdot \Big(1-\P_\phi(\tau_\e\leq R)-\P_\phi(\cE_{0,R}^c(\delta_0) )\Big)\\
& \geq  \lambda_\e  \cdot \inf_{\|\psi\|_r\leq \delta} \E_{\psi} \tau^\e-\lambda_\e R-
\lambda_\e \cdot \inf_{\|\psi\|_r\leq \delta} \E_{\psi} \tau^\e\cdot \Big(\P_\phi(\tau_\e\leq R)+\P_\phi(\cE_{0,R}^c(\delta_0) )\Big)\\
&\geq \lambda_\e (1-\varkappa) \cdot \inf_{\|\psi\|_r\leq \delta} \E_{\psi} \tau^\e.
\ea
The probability $\P_\phi(\lambda_\e\tau_\e>u, X^\e(\tau_\e)>v)$, $v>b$, is treated analogously.

% \bibliographystyle{plainnat}
% 
% \begin{small}\bibliography{biblio-new}\end{small}

\begin{thebibliography}{59}
\providecommand{\natexlab}[1]{#1}
\providecommand{\url}[1]{\texttt{#1}}
\expandafter\ifx\csname urlstyle\endcsname\relax
  \providecommand{\doi}[1]{doi: #1}\else
  \providecommand{\doi}{doi: \begingroup \urlstyle{rm}\Url}\fi

\bibitem[Applebaum(2009)]{Applebaum-09}
D.~Applebaum.
\newblock \emph{{{L{\'e}}vy Processes and Stochastic Calculus}}, volume 116 of
  \emph{{Cambridge Studies in Advanced Mathematics}}.
\newblock Cambridge University Press, Cambridge, second edition, 2009.

\bibitem[Arrhenius(1889)]{Arrhenius-89}
S.~Arrhenius.
\newblock {{\"U}ber die Dissociationsw{\"a}rme und den Einflu{\ss} der
  Temperatur auf den Dissociationsgrad der Elektrolyte}.
\newblock \emph{Zeitschrift f{\"u}r physikalische Chemie}, \penalty0
  (1):\penalty0 96--116, 1889.

\bibitem[Azencott et~al.(2018)Azencott, Geiger, and Ott]{azencott2018large}
R.~Azencott, B.~Geiger, and W.~Ott.
\newblock {Large deviations for Gaussian diffusions with delay}.
\newblock \emph{Journal of Statistical Physics}, 170\penalty0 (2):\penalty0
  254--285, 2018.

\bibitem[Bao et~al.(2016)Bao, Yin, and Yuan]{bao2016asymptotic}
J.~Bao, G.~Yin, and C.~Yuan.
\newblock \emph{{Asymptotic Analysis for Functional Stochastic Differential
  Equations}}.
\newblock {SpringerBriefs in Mathematics}. Springer, 2016.

\bibitem[Barret et~al.(2010)Barret, Bovier, and M{\'e}l{\'e}ard]{BarBovMel10}
F.~Barret, A.~Bovier, and S.~M{\'e}l{\'e}ard.
\newblock {Uniform estimates for metastable transitions in a coupled bistable
  system}.
\newblock \emph{Electronic Journal of Probability}, 15:\penalty0 323--345,
  2010.

\bibitem[Battisti and Hirst(1989)]{BatHir89}
D.~S. Battisti and A.~C. Hirst.
\newblock {Interannual variability in a tropical atmosphere-ocean model:
  Influence of the basic state, ocean geometry and nonlinearity}.
\newblock \emph{Journal of the Atmospheric Sciences}, 46\penalty0
  (12):\penalty0 1687--1712, 1989.

\bibitem[Benzi et~al.(1981)Benzi, Parisi, Sutera, and Vulpiani]{BenziPSV-81}
R.~Benzi, G.~Parisi, A.~Sutera, and A.~Vulpiani.
\newblock {The mechanism of stochastic resonance}.
\newblock \emph{Journal of Physics A}, 14:\penalty0 453--457, 1981.

\bibitem[Benzi et~al.(1982)Benzi, Parisi, Sutera, and Vulpiani]{BenziPSV-82}
R.~Benzi, G.~Parisi, A.~Sutera, and A.~Vulpiani.
\newblock {Stochastic resonance in climatic changes}.
\newblock \emph{Tellus}, 34:\penalty0 10--16, 1982.

\bibitem[Benzi et~al.(1983)Benzi, Parisi, Sutera, and Vulpiani]{BenziPSV-83}
R.~Benzi, G.~Parisi, A.~Sutera, and A.~Vulpiani.
\newblock {A theory of stochastic resonance in climatic change}.
\newblock \emph{SIAM Journal on Applied Mathematics}, 43:\penalty0 563--578,
  1983.

\bibitem[Berglund and Gentz(2004)]{BerglundG-04}
N.~Berglund and B.~Gentz.
\newblock {On the noise-induced passage through an unstable periodic orbit {I}:
  {T}wo-level model}.
\newblock \emph{Journal of Statistical Physics}, 114\penalty0 (5--6):\penalty0
  1577--1618, 2004.

\bibitem[Berglund and Gentz(2010)]{BerGen-10}
N.~Berglund and B.~Gentz.
\newblock {The {E}yring--{K}ramers law for potentials with nonquadratic
  saddles}.
\newblock \emph{Markov Processes and Related Fields}, 16\penalty0 (3):\penalty0
  549--598, 2010.

\bibitem[Berglund and Gentz(2013)]{BerGen-12}
N.~Berglund and B.~Gentz.
\newblock {Sharp estimates for metastable lifetimes in parabolic {SPDE}s:
  {K}ramers' law and beyond}.
\newblock \emph{Electronic Journal of Probability}, 18\penalty0 (24):\penalty0
  1--58, 2013.

\bibitem[B{\'o}dai and Franzke(2017)]{bodai2017predictability}
T.~B{\'o}dai and C.~Franzke.
\newblock {Predictability of fat-tailed extremes}.
\newblock \emph{Physical Review E}, 96\penalty0 (3):\penalty0 032120, 2017.

\bibitem[Bovier et~al.(2002)Bovier, Eckhoff, Gayrard, and
  Klein]{BovEckGayKle02}
A.~Bovier, M.~Eckhoff, V.~Gayrard, and M.~Klein.
\newblock {Metastability and low lying spectra in reversible {M}arkov chains}.
\newblock \emph{Communications in Mathematical Physics}, 228:\penalty0
  219--255, 2002.

\bibitem[Bovier et~al.(2004)Bovier, Eckhoff, Gayrard, and Klein]{BovierEGK-04}
A.~Bovier, M.~Eckhoff, V.~Gayrard, and M.~Klein.
\newblock {Metastability in reversible diffusion processes I: Sharp asymptotics
  for capacities and exit times}.
\newblock \emph{Journal of the European Mathematical Society}, 6\penalty0
  (4):\penalty0 399--424, 2004.

\bibitem[Budhiraja et~al.(2011)Budhiraja, Dupuis, and
  Maroulas]{BudhirajaDupuisMaroulas-11}
A.~Budhiraja, P.~Dupuis, and V.~Maroulas.
\newblock {Variational representations for continuous time processes}.
\newblock \emph{Annales de l'Institut Henri Poincar{\'e}, Probabilit{\'e}s et
  Statistiques}, 47\penalty0 (3):\penalty0 725--747, 2011.

\bibitem[Burgers(1999)]{Burgers99}
G.~Burgers.
\newblock {The El Nino stochastic oscillator}.
\newblock \emph{Climate Dynamics}, 15\penalty0 (7):\penalty0 521--531, 1999.

\bibitem[Cerrai and Roeckner(2004)]{CerRoeck-04}
S.~Cerrai and M.~Roeckner.
\newblock {Large deviations for stochastic reaction-diffusion systems with
  multiplicative noise and non-Lipschitz reaction term}.
\newblock \emph{Annals of Probability}, 1B\penalty0 (32):\penalty0 1100--1139,
  2004.

\bibitem[Chen et~al.(2019)Chen, Wu, Duan, Kurths, and Lic]{ChenWuDuanKurts-19}
X.~Chen, F.~Wu, J.~Duan, J.~Kurths, and X.~Lic.
\newblock {Most probable dynamics of a genetic regulatory network under stable
  L{\'e}vy noise}.
\newblock \emph{Applied Mathematics and Computation}, 348\penalty0
  (1):\penalty0 425--436, 2019.

\bibitem[Cram{\'e}r(1938)]{Cramer-38}
H.~Cram{\'e}r.
\newblock {Sur un nouveau theorme-limite de la theorie des probabilites}.
\newblock \emph{Actualites scientiques et industrielles, Hermann et Cie,
  Paris}, 277\penalty0 (736):\penalty0 5--23, 1938.

\bibitem[Debussche et~al.(2013)Debussche, H{\"o}gele, and Imkeller]{DHI13}
A.~Debussche, M.~H{\"o}gele, and P.~Imkeller.
\newblock \emph{{Metastability of Reaction Diffusion Equations with Small
  Regularly Varying Noise}}, volume 2085 of \emph{{Lecture Notes in
  Mathematics}}.
\newblock Springer, Cham, 2013.

\bibitem[Dembo and Zeitouni(1998)]{DemboZ-98}
A.~Dembo and O.~Zeitouni.
\newblock \emph{{Large Deviations Techniques and Applications}}, volume~38 of
  \emph{{Applications of Mathematics}}.
\newblock Springer, second edition, 1998.

\bibitem[Deuschel and Stroock(1989)]{DeuStr89}
J.-D. Deuschel and D.~Stroock.
\newblock \emph{{Large Deviations}}, volume 137 of \emph{{Pure and Applied
  Mathematics}}.
\newblock Academic Press, 1989.

\bibitem[Ditlevsen(1999{\natexlab{a}})]{Ditlevsen-99a}
P.~D. Ditlevsen.
\newblock {Observation of $\alpha$-stable noise induced millenial climate
  changes from an ice record}.
\newblock \emph{Geophysical Research Letters}, 26\penalty0 (10):\penalty0
  1441--1444, 1999{\natexlab{a}}.

\bibitem[Ditlevsen(1999{\natexlab{b}})]{Ditlevsen-99b}
P.~D. Ditlevsen.
\newblock {Anomalous jumping in a double-well potential}.
\newblock \emph{Physical Review E}, 60\penalty0 (1):\penalty0 172--179,
  1999{\natexlab{b}}.

\bibitem[Eyring(1935)]{Eyring-35}
H.~Eyring.
\newblock {The activated complex in chemical reactions}.
\newblock \emph{The Journal of Chemical Physics}, 3:\penalty0 107--115, 1935.

\bibitem[Freidlin(2000)]{Freidlin-00}
M.~Freidlin.
\newblock {Quasi-deterministic approximation, metastability and stochastic
  resonance}.
\newblock \emph{Physica D}, 137:\penalty0 333--352, 2000.

\bibitem[Freidlin and Wentzell(2012)]{FreidlinW-12}
M.~I. Freidlin and A.~D. Wentzell.
\newblock \emph{{Random Perturbations of Dynamical Systems}}, volume 260 of
  \emph{{Grundlehren der Mathematischen Wissenschaften}}.
\newblock Springer, Heidelberg, third edition, 2012.

\bibitem[Gairing et~al.(2017)Gairing, H{\"o}gele, Kosenkova, and
  Monahan]{GaiHoeKosMon17}
J.~Gairing, M.~H{\"o}gele, T.~Kosenkova, and A.~Monahan.
\newblock {How close are time series to power tail L{\'e}vy diffusions?}
\newblock \emph{Chaos}, 11\penalty0 (27):\penalty0 073112, 2017.

\bibitem[Ghil et~al.(2008)Ghil, Zaliapin, and Thompson]{GhiZalTho08}
M.~Ghil, I.~Zaliapin, and S.~Thompson.
\newblock {A delay differential model of {ENSO} variability: {P}arametric
  instability and the distribution of extremes}.
\newblock \emph{Nonlinear Processes in Geophysics}, 15:\penalty0 417--433,
  2008.

\bibitem[Godovanchuk(1982)]{Godovanchuk-82}
V.~V. Godovanchuk.
\newblock {Asymptotic probabilities of large deviations due to large jumps of a
  Markov process}.
\newblock \emph{Theory of Probability and its Applications}, 26:\penalty0
  314--327, 1982.

\bibitem[Gushchin and K{\"u}chler(2000)]{GK-00}
A.~A. Gushchin and U.~K{\"u}chler.
\newblock {On stationary solutions of delay differential equations driven by a
  L{\'e}vy process}.
\newblock \emph{Stochastic Processes and their Applications}, 88\penalty0
  (2):\penalty0 195--211, 2000.

\bibitem[Hale and {Verduyn Lunel}(1993)]{Hale-Lunel-93}
J.~K. Hale and S.~M. {Verduyn Lunel}.
\newblock \emph{{Introduction to Functional Differential Equations}}, volume~99
  of \emph{{Applied Mathematical Sciences}}.
\newblock Springer, New York, 1993.

\bibitem[H{\"o}gele and Pavlyukevich(2013)]{HoePav-13}
M.~H{\"o}gele and I.~Pavlyukevich.
\newblock {The exit problem from a neighborhood of the global attractor for
  dynamical systems perturbed by heavy-tailed L{\'e}vy processes}.
\newblock \emph{Journal of Stochastic Analysis and Applications},
  32(1):\penalty0 163--190, 2013.

\bibitem[Huang et~al.(2011)Huang, Tao, and Xu]{HuangTaoXu-11}
J.~Huang, W.~Tao, and B.~Xu.
\newblock {Effects of small time delay on a bistable system subject to L{\'e}vy
  stable noise}.
\newblock \emph{Journal of Physics A: Mathematical and Theoretical},
  44:\penalty0 385101, 2011.

\bibitem[{Imkeller} and {Pavlyukevich}(2006{\natexlab{a}})]{ImkellerP-06}
P.~{Imkeller} and I.~{Pavlyukevich}.
\newblock {First exit times of SDEs driven by stable {L}{\'e}vy processes}.
\newblock \emph{Stochastic Processes and their Applications}, 116\penalty0
  (4):\penalty0 611--642, 2006{\natexlab{a}}.

\bibitem[{Imkeller} and {Pavlyukevich}(2006{\natexlab{b}})]{ImkellerP-06a}
P.~{Imkeller} and I.~{Pavlyukevich}.
\newblock {{L}{\'e}vy flights: transitions and meta-stability}.
\newblock \emph{Journal of Physics A: Mathematical and General}, 39:\penalty0
  L237--L246, 2006{\natexlab{b}}.

\bibitem[Imkeller et~al.(2009)Imkeller, Pavlyukevich, and
  Wetzel]{ImkPavlWetz-09}
P.~Imkeller, I.~Pavlyukevich, and T.~Wetzel.
\newblock {First exit times for {L{\'e}}vy-driven diffusions with exponentially
  light jumps}.
\newblock \emph{The Annals of Probability}, 37\penalty0 (2):\penalty0 530--564,
  2009.

\bibitem[Imkeller et~al.(2010)Imkeller, Pavlyukevich, and Wetzel]{ImkPavWet10}
P.~Imkeller, I.~Pavlyukevich, and T.~Wetzel.
\newblock {The hierarchy of exit times of {L{\'e}}vy-driven {L}angevin
  equations}.
\newblock \emph{The European Physical Journal Special Topics}, 191:\penalty0
  211--222, 2010.

\bibitem[Kramers(1940)]{Kramers-40}
H.~A. Kramers.
\newblock {Brownian motion in a field of force and the diffusion model of
  chemical reactions}.
\newblock \emph{Physica}, 7:\penalty0 284--304, 1940.

\bibitem[Lipshutz(2018)]{Lipshutz18}
D.~Lipshutz.
\newblock {Exit time asymptotics for small noise stochastic delay differential
  equations}.
\newblock \emph{Discrete \& Continuous Dynamical Systems --- A}, 38\penalty0
  (6):\penalty0 3099--3138, 2018.

\bibitem[Masoller(2002)]{Masoller-02}
C.~Masoller.
\newblock {Noise-induced resonance in delayed feedback systems}.
\newblock \emph{Physical Review Letters}, 88\penalty0 (3):\penalty0 034102,
  2002.

\bibitem[Masoller(2003)]{Masoller-03}
C.~Masoller.
\newblock {Distribution of residence times of time-delayed bistable systems
  driven by noise}.
\newblock \emph{Physical Review Letters}, 90\penalty0 (2):\penalty0 020601,
  2003.

\bibitem[M{\"u}nnich et~al.(1991)M{\"u}nnich, Cane, and Zebiak]{MueCanZeb91}
M.~M{\"u}nnich, M.~A. Cane, and S.~E. Zebiak.
\newblock {A study of self-excited oscillations of the tropical
  ocean-atmosphere system. Part II: Nonlinear cases}.
\newblock \emph{Journal of the Atmospheric Sciences}, 48\penalty0
  (10):\penalty0 1238--1248, 1991.

\bibitem[Pavlyukevich(2011)]{Pavlyukevich11}
I.~Pavlyukevich.
\newblock {First exit times of solutions of stochastic differential equations
  driven by multiplicative {L{\'e}}vy noise with heavy tails}.
\newblock \emph{Stochastics and Dynamics}, 11\penalty0 (2\&3):\penalty0
  495--519, 2011.

\bibitem[Penland and Ewald(2008)]{PenlandE-08}
C.~Penland and B.~E. Ewald.
\newblock {On modelling physical systems with stochastic models: diffusion
  versus L{\'e}vy processes}.
\newblock \emph{Philosophical Transactions of the Royal Society A},
  366:\penalty0 2457--2476, 2008.

\bibitem[Protter(2004)]{Protter-04}
P.~E. Protter.
\newblock \emph{{Stochastic Integration and Differential Equations}}, volume~21
  of \emph{{Applications of Mathematics}}.
\newblock Springer, Berlin, second edition, 2004.

\bibitem[Rei{\ss} et~al.(2006)Rei{\ss}, Riedle, and van Gaans]{RRvG06}
M.~Rei{\ss}, M.~Riedle, and O.~van Gaans.
\newblock {Delay differential equations driven by L{\'e}vy processes:
  stationarity and Feller properties}.
\newblock \emph{Stochastic Processes and their Applications}, 116\penalty0
  (10):\penalty0 1409--1432, 2006.

\bibitem[Rosi{\'n}ski(2007)]{Rosinski-07}
J.~Rosi{\'n}ski.
\newblock {Tempering stable processes}.
\newblock \emph{Stochastic Processes and Their Applications}, 117\penalty0
  (6):\penalty0 677--707, 2007.

\bibitem[Sato(1999)]{Sato-99}
K.~Sato.
\newblock \emph{{L{\'e}vy Processes and Infinitely Divisible Distributions}},
  volume~68 of \emph{{Cambridge Studies in Advanced Mathematics}}.
\newblock Cambridge University Press, Cambridge, 1999.

\bibitem[Shaikhet(2011)]{Shaikhet-11}
L.~Shaikhet.
\newblock \emph{{Lyapunov Functionals and Stability of Stochastic Difference
  Equations}}.
\newblock Springer, London, 2011.

\bibitem[Sokolov et~al.(2004)Sokolov, Chechkin, and Klafter]{SokolovCK-04}
I.~M. Sokolov, A.~V. Chechkin, and J.~Klafter.
\newblock {Fractional diffusion equation for a power-law-truncated L{\'e}vy
  process}.
\newblock \emph{Physica A}, 336\penalty0 (3--4):\penalty0 245--251, 2004.

\bibitem[Suarez and Schopf(1988)]{SuaSch1988}
M.~J. Suarez and P.~S. Schopf.
\newblock {A delayed action oscillator for ENSO}.
\newblock \emph{Journal of the Atmospheric Sciences}, 45\penalty0
  (21):\penalty0 3283--3287, 1988.

\bibitem[Tziperman et~al.(1994)Tziperman, Stone, Cane, and
  Jarosh]{TziStoCanJar94}
E.~Tziperman, L.~Stone, M.~A. Cane, and H.~Jarosh.
\newblock {{El-Ni{\~n}o} chaos: overlapping of resonances between the seasonal
  cycle and the {P}acific ocean-atmosphere oscillator}.
\newblock \emph{Science}, 264\penalty0 (5155):\penalty0 72--74, 1994.

\bibitem[Uchaikin and Zolotarev(1999)]{UchaikinZ-99}
V.~V. Uchaikin and V.~M. Zolotarev.
\newblock \emph{{Chance and Stability. Stable Distributions and Their
  Applications}}.
\newblock {Modern Probability and Statistics}. VSP, 1999.

\bibitem[Vent-tsel{\rq}(1976)]{Wentzell-76}
A.~D. Vent-tsel{\rq}.
\newblock {On the asymptotic behavior of the first eigenvalue of a second-order
  differential operator with small parameter in higher derivatives}.
\newblock \emph{Theory of Probability and its Applications}, 20\penalty0
  (3):\penalty0 599--602, 1976.

\bibitem[Ventsel' and Freidlin(1970)]{FW70}
A.~D. Ventsel' and M.~I. Freidlin.
\newblock {On small random perturbations of dynamical systems}.
\newblock \emph{Russian Mathematical Surveys}, 25\penalty0 (1):\penalty0 1--55,
  1970.

\bibitem[Zabczyk(1987)]{zabczyk1987exit}
J.~Zabczyk.
\newblock {Exit problem for infinite dimensional systems}.
\newblock In G.~{Da Prato} and L.~Tubaro, editors, \emph{{Stochastic Partial
  Differential Equations and Applications}}, volume 1236 of \emph{{Lecture
  Notes in Mathematics}}, pages 239--257. Springer, Berlin, 1987.

\bibitem[Zaliapin and Ghil(2010)]{ZalGhi10}
I.~Zaliapin and M.~Ghil.
\newblock {A delay differential model of {ENSO} variability, {P}art 2: {P}hase
  locking, multiple solutions, and dynamics of extrema}.
\newblock \emph{Nonlinear Processes in Geophysics}, 17:\penalty0 123--135,
  2010.
\end{thebibliography}

\begin{small}

\end{small}

\end{document}